\newtheorem{thm}{Theorem}[section]
\newtheorem*{thm-non}{Theorem}
\newtheorem{lem}[thm]{Lemma}
\newtheorem{cor}[thm]{Corollary}
\theoremstyle{definition}
\newtheorem{rem}[thm]{Remark}
\DeclareMathOperator\PP{\mathbb{P}}
\DeclareMathOperator\OO{\mathcal{O}}
\DeclareMathOperator\HP{\mathbb{P}^{2[3]}}
\DeclareMathOperator\Db{D^b}
\DeclareMathOperator\Hom{Hom}
\DeclareMathOperator\Ext{Ext}
\DeclareMathOperator\ext{ext}
\DeclareMathOperator\Rex{\mathcal{E}xt^1_q}
\DeclareMathOperator\Bl{Bl}
\DeclareMathOperator\id{id}
\DeclareMathOperator\Ima{Im}
\DeclareMathOperator\cok{Coker}
\DeclareMathOperator\Pic{Pic}
\DeclareMathOperator\rk{rk}
\DeclareMathOperator\supp{supp}
\begin{document}

\title{The Fourier-Mukai transform of a universal family of stable vector bundles}

\author{Fabian Reede}
\address{Institut f\"ur Algebraische Geometrie, Leibniz Universit\"at Hannover, Welfengarten 1, 30167 Hannover, Germany}
\email{reede@math.uni-hannover.de}

\subjclass[2010]{14J60,14F05}

\begin{abstract}
In this note we prove that the Fourier-Mukai transform $\Phi_{\mathcal{U}}$ of the universal family of the moduli space $\mathcal{M}_{\PP^2}(4,1,3)$ is not fully faithful.
\end{abstract}

\maketitle

\section*{Introduction}
To every smooth projective variety $X$ one can associate its bounded derived category of coherent sheaves $\Db(X)$. The derived category contains a lot of geometric information about $X$. In some cases one can even recover $X$ from $\Db(X)$ but there are also examples of different varieties with equivalent derived categories, see \cite{huy2} for an introduction.

To compare the derived categories of two smooth projective varieties $X$ and $Y$, one needs to study functors between them. As it turns out, most of the interesting functors are Fourier-Mukai transforms $\Phi_{\mathcal{F}}: \Db(X) \rightarrow \Db(Y)$ for some object $\mathcal{F}\in \Db(X\times Y)$.

In this note we are interested in fully faithful Fourier-Mukai transforms because they give a semi-orthogonal decomposition of the derived category $\Db(Y)$ in smaller admissible subcategories. For example Krug and Sosna prove in \cite{krug3}  that the Fourier-Mukai transform $\Phi_{\mathcal{I}_{\mathcal{Z}}}: \Db(S) \rightarrow \Db(S^{[n]})$ induced by the universal ideal sheaf $\mathcal{I}_{\mathcal{Z}}$ of the Hilbert scheme $S^{[n]}$ is fully faithful for a surface $S$ with $p_g=q=0$, hence $\Db(S)$ is an admissible subcategory in $\Db(S^{[n]})$. This result was generalized for the Hilbert square $X^{[2]}$ to smooth projective varieties $X$ with exceptional structure sheaf and arbitrary dimension $\dim(X)\geq 2$, see \cite{bel}.

Another example of this behaviour is given by the moduli space $\mathcal{M}_C(2,L)$ of stable rank two vector bundles with fixed determinant $L$ of degree one on a smooth projective curve $C$ of genus $g\geq 2$. This moduli space is fine and thus there is a universal family $\mathcal{U}$ on $C\times \mathcal{M}_C(2,L)$. By work of Narasimhan, see \cite{nara1} and \cite{nara2}, as well as Fonarev and Kuznetsov, see \cite{fon}, it is known that the Fourier-Mukai transform $\Phi_{\mathcal{U}}: \Db(C)\rightarrow \Db(\mathcal{M}_C(2,L))$ is fully faithful. Thus $\Db(C)$ is an admissible subcategory of $\Db(\mathcal{M}_C(2,L))$. This also solves the so-called Fano visitor problem for smooth projective curves of genus $g\geq 2$. This result was generalized in \cite{bel2} to the higher rank case $\mathcal{M}_C(r,L)$ for a line bundle $L$ of degree $d$ such that $\text{gcd}(r,d)=1$ and curves of genus $g\geq g_0$ for some $g_0\in \mathbb{N}$.

In light of these examples one can ask if the Fourier-Mukai transform of the universal family $\mathcal{U}$ on a moduli space $\mathcal{M}_{\PP^2}(r,c_1,c_2)$ of stable sheaves on $\PP^2$ is also fully faithful. Our main result is, that this is not always the case. We prove:
\begin{thm-non}
	The Fourier-Mukai transform 
	\begin{equation*}
	\Phi_{\mathcal{U}}: \Db(\PP^2) \rightarrow \Db(\mathcal{M}_{\PP^2}(4,1,3))
	\end{equation*}
 induced by the universal family $\mathcal{U}$ of the moduli space $\mathcal{M}_{\PP^2}(4,1,3)$ is not fully faithful.
\end{thm-non}
The structure of this note is as follows: in section \ref{sect1} we recall some facts about the moduli space we are interested in. We construct an explicit family of stable sheaves for this moduli space in section \ref{sect2}. The computation of some cohomology groups for the family of stable sheaves can be found in section \ref{sect3}. In the final section \ref{sect4} we prove the main result.

Everything in this note is defined over the field of complex numbers $\mathbb{C}$. The projective plane $\PP^2$ is polarized by $H=\OO_{\PP^2}(1)$, thus $\mu$-stability means $\mu_{H}$-stability. A cohomology group written in lowercase characters simply denotes its dimension as a $\mathbb{C}$-vector space.

\subsection*{Acknowledgement}
I thank Pieter Belmans for asking me if it is possible to compute the $\Ext$-groups of the universal family and for pointing out some inaccuracies in an earlier draft of this note. I also thank Andreas Krug for explaining Lemma \ref{comp} to me.

\section{The moduli space}\label{sect1}
We begin by studying the moduli space $\mathcal{M}_{\PP^2}(4,1,3)$ of $S$-equivalence classes of $\mu$-semistable torsion-free sheaves $E$ on the projective plane $\PP^2$ with the following numerical data:
\begin{equation*}
\rk(E)=4,\,\,\,c_1(E)=1\,\,\, c_2(E)=3.
\end{equation*}
(Since the first Chern class is just an integer multiple of the polarization $H$, we simply identify it with this number.)

By this choice of rank $r$ and Chern classes $c_1$ resp. $c_2$ we get:
\begin{lem}
The moduli space $\mathcal{M}_{\PP^2}(4,1,3)$ is fine and there are no proper semistable sheaves.
\end{lem}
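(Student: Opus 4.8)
The statement has two parts, which I would treat in turn: first that every $\mu$-semistable torsion-free sheaf with the prescribed invariants is in fact $\mu$-stable (so there are no proper, i.e.\ strictly, semistable sheaves), and then that the resulting moduli space carries a universal family.

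For the first part I would argue purely numerically. Let $E$ be a $\mu$-semistable torsion-free sheaf with $\rk(E)=4$ and $c_1(E)=1$, so that $\mu(E)=\tfrac14$. If $F\subsetneq E$ is a subsheaf with $0<\rk(F)=r'<4$, then $\mu(F)=c_1(F)/r'$ with $c_1(F)\in\mathbb Z$ and $r'\in\{1,2,3\}$; since $4\nmid r'$, the equality $c_1(F)/r'=\tfrac14$ is impossible, so semistability forces $\mu(F)<\mu(E)$. Hence $E$ is $\mu$-stable. Equivalently, $\gcd(\rk(E),c_1(E)\cdot H)=1$ already rules out strictly $\mu$-semistable objects; in particular all the usual stability notions ($\mu$-(semi)stable and Gieseker (semi)stable) coincide here, the $S$-equivalence classes are just isomorphism classes, and $\mathcal{M}_{\PP^2}(4,1,3)$ is a projective moduli space of stable sheaves.

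For the second part I would invoke the standard existence criterion for the universal family (Drézet--Le Potier; see also Huybrechts--Lehn, Thm.~4.6.5): once there are no strictly semistable sheaves, a universal family exists as soon as $\gcd(\rk,c_1,\chi)=1$, equivalently as soon as there is a $K$-theory class $\alpha\in K(\PP^2)$ with $\chi(E\otimes\alpha)=1$ for $E$ with the given Chern character. Here $c_1=1$, so this gcd is automatically $1$; concretely, a Riemann--Roch computation on $\PP^2$ with $\operatorname{ch}(E)=4+H-\tfrac52H^2$ and $\operatorname{td}(\PP^2)=1+\tfrac32H+H^2$ gives $\chi(E(m))=2m^2+7m+3$, so $\chi(E)=3$ and $\chi(E(-1))=-2$, whence $\chi\bigl(E\otimes(\OO_{\PP^2}\oplus\OO_{\PP^2}(-1))\bigr)=3+(-2)=1$. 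This exhibits the required class and shows the moduli space is fine.

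I do not expect a genuine obstacle: both assertions reduce to elementary arithmetic together with a citation of the well-known fineness criterion. The only points requiring mild care are quoting the criterion in a form valid for $\PP^2$ and not slipping in the Riemann--Roch bookkeeping.
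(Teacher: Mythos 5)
Your proof is correct and takes essentially the same route as the paper, whose entire argument is to verify $\gcd\bigl(r,\,c_1.H,\,\tfrac12 c_1.(c_1-K_{\PP^2})-c_2\bigr)=\gcd(4,1,-1)=1$ and cite \cite[Corollary 4.6.7, Remark 4.6.8]{huy}; you have merely unpacked that criterion into its two ingredients ($\gcd(4,1)=1$ ruling out strictly semistable sheaves, and an explicit class with $\chi(E\otimes\alpha)=1$ giving fineness), and your Riemann--Roch bookkeeping $\chi(E(m))=2m^2+7m+3$ is accurate.
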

\begin{proof}
We have
\begin{equation*}
	\text{gcd}\left(r,c_1.H, \frac{1}{2}c_1.(c_1-K_{\PP^2})-c_2 \right)=\text{gcd}(4,1,-1)=1.
\end{equation*}
The result now follows from \cite[Corollary 4.6.7]{huy} and \cite[Remark 4.6.8]{huy}.
\end{proof}
\begin{rem}
This lemma shows that the moduli space $\mathcal{M}_{\PP^2}(4,1,3)$ has a universal family, that is a sheaf $\mathcal{U}$ on $\PP^2\times\mathcal{M}_{\PP^2}(4,1,3)$  flat over $\mathcal{M}_{\PP^2}(4,1,3)$ such that for every $E$ with $\left[E\right]\in \mathcal{M}_{\PP^2}(4,1,3)$ there is an isomorphism $	\mathcal{U}_{\left[E\right]}\cong E$, where $\mathcal{U}_{\left[E \right]}$ denotes the restriction of $\mathcal{U}$ to the fiber over $\left[E \right]$.
\end{rem}

The following properties of the moduli space are probably well known:
\begin{lem}\label{prop}
The moduli space $\mathcal{M}_{\PP^2}(4,1,3)$ is a smooth projective variety of dimension six. Furthermore all sheaves $E$ classified by this moduli space are locally free.
\end{lem}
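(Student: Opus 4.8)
The proof splits into a standard-theory part---that $\mathcal{M}_{\PP^2}(4,1,3)$ is a projective variety---and a geometric part: smoothness, the dimension, and local freeness. For the first part I would invoke the GIT construction of moduli of semistable sheaves (\cite{huy}); by the preceding lemma there are no strictly semistable sheaves, so every point is $\mu$-stable, and irreducibility of moduli of sheaves on $\PP^2$ is classical (Dr\'ezet--Le Potier). The two tools for the remainder are Serre duality combined with slope-stability, and the identity $\chi(F,F)=r^2-\Delta(F)$ on $\PP^2$ for a coherent sheaf $F$ of rank $r$, where $\Delta(F)=2r\,c_2(F)-(r-1)c_1(F)^2$, which follows from Hirzebruch--Riemann--Roch.

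For smoothness and the dimension, fix $[E]\in\mathcal{M}_{\PP^2}(4,1,3)$, so $E$ is $\mu$-stable. Since $\mu(E\otimes K_{\PP^2})=\mu(E)-3<\mu(E)$ and $E$ and $E\otimes K_{\PP^2}$ are $\mu$-stable of the same rank, a nonzero homomorphism $E\to E\otimes K_{\PP^2}$ would have image that is at once a quotient of $E$ and a subsheaf of $E\otimes K_{\PP^2}$, which the usual slope estimates forbid; hence $\Hom(E,E\otimes K_{\PP^2})=0$, and Serre duality gives $\Ext^2(E,E)=0$. So the moduli space is smooth at $[E]$ with tangent space $\Ext^1(E,E)$. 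As $E$ is simple, $\hom(E,E)=1$, whence $\ext^1(E,E)=1-\chi(E,E)$; and with $\rk(E)=4$, $c_1(E)=1$, $c_2(E)=3$ one has $\Delta(E)=24-3=21$ and $\chi(E,E)=16-21=-5$, so $\ext^1(E,E)=6$. This gives smoothness and $\dim\mathcal{M}_{\PP^2}(4,1,3)=6$.

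For local freeness I would argue by contradiction. Suppose some $[E]$ is not locally free. Since $E$ is torsion-free on a smooth surface, its reflexive hull $E^{\vee\vee}$ is locally free and sits in a short exact sequence $0\to E\to E^{\vee\vee}\to Q\to 0$ with $Q$ of finite length $\ell\geq 1$; thus $E^{\vee\vee}$ has rank $4$, $c_1=1$, $c_2=3-\ell$. It is again $\mu$-stable: a subsheaf $G\subseteq E^{\vee\vee}$ destabilizing it meets $E$ in $G\cap E$ of the same rank and the same first Chern class---the quotient $G/(G\cap E)$ embeds into $Q$ and hence is zero-dimensional---so $G\cap E$ would destabilize $E$. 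Applying the computation of the previous paragraph to $E^{\vee\vee}$, which is $\mu$-stable and therefore simple with vanishing $\Ext^2$, gives $\ext^1(E^{\vee\vee},E^{\vee\vee})=1-\chi(E^{\vee\vee},E^{\vee\vee})=1-\bigl(16-\Delta(E^{\vee\vee})\bigr)$, and with $\Delta(E^{\vee\vee})=8(3-\ell)-3=21-8\ell$ this equals $6-8\ell<0$ for $\ell\geq 1$, which is absurd. Hence $\ell=0$ and $E\cong E^{\vee\vee}$ is locally free.

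I do not anticipate real difficulties. One point deserves emphasis: for local freeness the Bogomolov inequality alone is insufficient, since it only forces $c_2(E^{\vee\vee})\geq 1$; the decisive input is that $\ext^1$ of a $\mu$-stable sheaf, being a dimension, is nonnegative, which forces $c_2\geq 3$ for $\mu$-stable sheaves of rank $4$ and $c_1=1$ on $\PP^2$ and thereby excludes $\ell\geq 1$. The remaining work is just slope bookkeeping---the $\Ext^2$-vanishing and the $\mu$-stability of $E^{\vee\vee}$---and keeping signs straight in the Riemann--Roch computation.
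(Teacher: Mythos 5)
Your proposal is correct and follows essentially the same route as the paper: Serre duality plus slope-stability for the vanishing of $\Ext^2(E,E)$, a Riemann--Roch count for the dimension, and the observation that $E^{\vee\vee}$ would be a $\mu$-stable sheaf whose moduli space has negative expected dimension $6-8\ell$ to rule out non-locally-free sheaves. You merely fill in details the paper leaves implicit (the $\mu$-stability of $E^{\vee\vee}$ and the derivation of the dimension formula), and cite Dr\'ezet--Le Potier instead of Ellingsrud for irreducibility.
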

\begin{proof}
The space $\mathcal{M}_{\PP^2}(4,1,3)$ is projective by construction. Since every sheaf $E$ is stable, we get by Serre duality
\begin{equation*}
	\Ext^2(E,E)\cong \Hom(E,E(-3))^{\vee} = 0
\end{equation*}
hence  $\mathcal{M}_{\PP^2}(4,1,3)$ is smooth and by \cite{elli} it is also irreducible. We also recall
\begin{equation*}
	\dim(\mathcal{M}_{\PP^2}(r,c_1,c_2))=\Delta-(r^2-1)\chi(\PP^2,\OO_{\PP^2})
\end{equation*}
where $\Delta=2rc_2-(r-1)c_1^2$ is the discriminant. So $\dim(\mathcal{M}_{\PP^2}(4,1,3))=6$.

The double dual of a $\mu$-stable torsion-free sheaf $E$ is still $\mu$-stable and defines a smooth point in $\mathcal{M}_{\PP^2}(4,1,3-\ell)$ with $\ell=\text{length}(E^{\vee\vee}/E)$. If $E$ were not locally free we would have $\ell\geq 1$ and $\mathcal{M}_{\PP^2}(4,1,3-\ell)$ would have negative dimension, which is not possible.
\end{proof}
\begin{rem}\label{locf}
Using Lemma \ref{prop} together with \cite[Lemma 2.1.7.]{huy} shows that the universal family $\mathcal{U}$ is itself locally free on $\PP^2\times\mathcal{M}_{\PP^2}(4,1,3)$. This implies that the sheaves $\mathcal{U}_p$, the restriction the fiber over $p\in \PP^2$, are also locally free on the moduli space.
\end{rem}

The sheaves classified by $\mathcal{M}_{\PP^2}(4,1,3)$ can be described more explicitly: 
\begin{lem}\label{exseq1}
	Let $E$ be a locally free sheaf on $\PP^2$ with $\left[E \right]\in \mathcal{M}_{\PP^2}(4,1,3)$, then there is a length three subscheme $Z\subset \PP^2$ and an exact sequence
\begin{equation*}
	\begin{tikzcd}
		0 \arrow[r] & \OO_{\PP^2}^{\oplus 3} \arrow[r] & E \arrow[r] & I_Z(1) \arrow[r] & 0.
	\end{tikzcd}
\end{equation*}	
\end{lem}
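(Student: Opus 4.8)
The plan is to run the classical Serre-type construction: exhibit $E$ as an extension of a twisted ideal sheaf by a trivial bundle, produced from three general global sections of $E$.

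First I would produce the sections. By Riemann--Roch on $\PP^2$, using $\mathrm{ch}(E)=4+H-\tfrac{5}{2}H^2$ (from $\rk(E)=4$, $c_1=1$, $c_2=3$) and $\mathrm{td}(\PP^2)=1+\tfrac{3}{2}H+H^2$, one computes $\chi(\PP^2,E)=3$. Since $E$ is $\mu$-stable, Serre duality gives $H^2(\PP^2,E)\cong\Hom(E,\OO_{\PP^2}(-3))^{\vee}=0$: a nonzero homomorphism $E\to\OO_{\PP^2}(-3)$ would exhibit a rank-one quotient of $E$ of slope $\le-3<\tfrac{1}{4}=\mu(E)$, contradicting stability. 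Hence $h^0(E)\ge\chi(E)=3$. (The same reasoning gives $H^0(E(-1))=\Hom(\OO_{\PP^2}(1),E)=0$, so no section of $E$ factors through a line subbundle; this is convenient below.)

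Next I would pick three general sections $s_1,s_2,s_3\in H^0(E)$, form the morphism $\varphi\colon\OO_{\PP^2}^{\oplus3}\to E$ they define, and argue that for a general choice $\varphi$ is injective with torsion-free cokernel $Q$. Injectivity is just generic injectivity of $\varphi$ (the source being torsion-free), i.e. linear independence of the $s_i$ at the generic point. As for the cokernel, $Q$ is a quotient of the locally free sheaf $E$, so it is torsion-free exactly when $\Ima(\varphi)$ is saturated, which can fail only along the degeneracy locus $D=\{x\in\PP^2:\rk\varphi_x\le2\}$; a Kleiman-type general-position argument should show that for general $s_i$ the locus $D$ has the expected codimension $(3-2)(4-2)=2$ (hence is finite), that $\{x:\rk\varphi_x\le1\}$ (expected codimension $6$) is empty, and that $\varphi$ degenerates generically along $D$, so that near each point of $D$ the sheaf $Q$ is locally isomorphic to the ideal sheaf of a (reduced) point; thus $Q$ is torsion-free of rank one. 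The step I expect to be the main obstacle is the hypothesis this argument requires, namely that $E$ is globally generated: a common base point of $H^0(E)$ would force $\varphi$ to drop rank there, putting torsion into $Q$, so global generation is genuinely needed. I would establish it separately --- on a general line $\ell$, $\mu$-stability and the Grauert--M\"ulich theorem force $E|_{\ell}\cong\OO_{\ell}(1)\oplus\OO_{\ell}^{\oplus3}$, which is globally generated, and combining this with the vanishings above (and, if needed, a short look at the Beilinson monad of $E$) one should deduce global generation of $E$ on all of $\PP^2$. If a uniform argument proves awkward, a fallback is to treat a general member of $\mathcal{M}_{\PP^2}(4,1,3)$ first and then propagate using irreducibility of the moduli space.

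Finally I would identify $Q$. A torsion-free rank-one sheaf on the smooth surface $\PP^2$ has the form $I_Z\otimes L$ for a line bundle $L$ and a zero-dimensional subscheme $Z\subset\PP^2$. From $0\to\OO_{\PP^2}^{\oplus3}\to E\to Q\to0$ one gets $c(E)=c(Q)$, in particular $c_1(Q)=c_1(E)=1$, so $L=\OO_{\PP^2}(1)$ and $Q\cong I_Z(1)$; and since $c(I_Z(1))=1+H+\mathrm{length}(Z)\,H^2$, comparing second Chern classes yields $\mathrm{length}(Z)=c_2(E)=3$. This produces the desired exact sequence.
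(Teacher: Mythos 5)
Your skeleton matches the paper's proof: Riemann--Roch gives $\chi(\PP^2,E)=3$, stability kills $h^2$, so $h^0(\PP^2,E)\geq 3$; one then runs the Serre construction on three sections and identifies the cokernel by comparing Chern classes. The first and last steps are fine. The gap is the middle step, and it sits exactly where you flagged trouble, but your proposed repair cannot work. The Kleiman-type transversality statement for degeneracy loci of general sections requires $E$ to be globally generated, and $E$ is \emph{never} globally generated here: surjectivity of $H^0(E)\otimes\OO_{\PP^2}\to E$ at even a single point would force $h^0(E)\geq \rk(E)=4$, whereas $h^0(E)=3$ away from the two-dimensional Brill--Noether locus $S$ that the paper studies later. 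Grauert--M\"ulich only tells you that $E|_{\ell}$ is generated by its sections \emph{on $\ell$}; those sections need not lift to $\PP^2$, so no global generation of $E$ can be extracted this way. Your diagnosis of the failure mode is also off: for $\varphi\colon\OO_{\PP^2}^{\oplus 3}\to E$ the fibre rank is at most $3$ everywhere, dropping to rank $2$ at finitely many points is exactly the expected behaviour that produces $Z$, and torsion in $Q$ appears only if the rank drops along a divisor. The fallback via irreducibility of the moduli space is not a proof either, since the existence of such a sequence is not visibly an open or a closed condition.

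What replaces transversality is $\mu$-stability, and this is what the paper does by quoting Nakashima's Lemma 1.5; the statement then holds for \emph{every} $3$-dimensional subspace $U\subset H^0(\PP^2,E)$, not just a general one. The mechanism: a torsion-free quotient of $\OO_{\PP^2}^{\oplus 3}$ has $c_1.H\geq 0$, with equality only for a trivial bundle, while a proper subsheaf $F\subsetneq E$ of rank $s\leq 3$ satisfies $c_1(F).H<\tfrac{s}{4}<1$, hence $c_1(F).H\leq 0$. Applied to $\Ima(\varphi)$ this forces $\Ima(\varphi)\cong\OO_{\PP^2}^{\oplus s}$, and $s=3$ because the $3$-dimensional space $U$ injects into $H^0(\Ima(\varphi))$; so $\varphi$ is injective. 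If the torsion subsheaf $T\subset Q$ were supported on a divisor, its preimage $E'\subset E$ would be a rank-three subsheaf with $c_1(E').H=c_1(T).H\geq 1$, so $\mu(E')\geq\tfrac13>\tfrac14=\mu(E)$, contradicting stability; and a zero-dimensional $T$ is impossible because then $E'$ would be a torsion-free sheaf agreeing with $\OO_{\PP^2}^{\oplus 3}$ outside finitely many points, hence contained in its double dual $\OO_{\PP^2}^{\oplus 3}$, forcing $T=0$. Either cite Nakashima's lemma as the paper does or write out this stability argument; the general-position route is a dead end for this bundle.
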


\begin{proof}
Hirzebruch-Riemann-Roch shows $\chi(\PP^2,E)=3$. The stability of $E$ implies that we have $h^2(\PP^2,E)=0$ and thus $h^0(\PP^2,E)\geq 3$.

Choose a 3-dimensional subspace $U\subset H^0(\PP^2,E)$, then by \cite[Lemma 1.5]{naka} the natural evaluation map $\varphi: U\otimes \OO_{\PP^2} \rightarrow E$ is injective with torsion-free quotient $Q=\cok(\varphi)$. We get the exact sequence
\begin{equation*}
	\begin{tikzcd}
		0 \arrow[r] & U\otimes\OO_{\PP^2} \arrow[r,"\varphi"] & E \arrow[r] & Q \arrow[r] & 0. 
	\end{tikzcd}
\end{equation*}	 
By comparing Chern classes we see that we must have $Q\cong I_Z(1)$ for a length three subscheme $Z\subset \PP^2$, that is $Z\in\HP$. This gives the desired exact sequence.
\end{proof}

Lemma \ref{exseq1} shows that there is a close connection between $\mathcal{M}_{\PP^2}(4,1,3)$ and $\HP$. This connection will become clearer in the next sections.

\section{Construction of a family}\label{sect2}
In this section we want to construct a $\HP$-family of $\mu$-stable locally free sheaves such that every member of this family is classified by $\mathcal{M}_{\PP^2}(4,1,3)$. The construction is based on a construction of Mukai, see \cite[Section 3]{muk}

The starting point of our construction is the observation that
\begin{equation}\label{obs}
	\ext^1(I_Z(1),\OO_{\PP^2})= h^1(\PP^2,I_Z(-2)) =3
\end{equation} 
for every $Z\in \HP$. 

We define $V:=\Ext^1(I_Z(1),\OO_{\PP^2})$ and observe the isomorphism 
\begin{equation*}
\Ext^1(I_Z(1),V^{\vee}\otimes\OO_{\PP^2})\cong\Ext^1(I_Z(1),\OO_{\PP^2})\otimes V^{\vee}\cong\Hom(V,V).
\end{equation*}
Hence there is a distinguished extension class $e\in \Ext^1(I_Z(1),V^{\vee}\otimes \OO_{\PP^2})$ corresponding to $\text{id}_V\in \Hom(V,V)$, giving rise to:
\begin{equation}\label{exseq2}
	\begin{tikzcd}
		0 \arrow{r} & V^{\vee}\otimes\OO_{\PP^2} \arrow{r} & E_Z \arrow{r} & I_Z(1) \arrow{r} & 0. 
	\end{tikzcd}
\end{equation}

\begin{rem}\label{hom}
	The sheaf $E_Z$ is called the universal extension of $I_Z(1)$ by $\OO_{\PP^2}$. By construction we have $\Hom(E_Z,\OO_{\PP^2})=0$.
\end{rem}

We want to study some of the properties of the sheaf $E_Z$. For example we have:
\begin{lem}
The sheaf $E_Z$  is a locally free sheaf on $\PP^2$.
\end{lem}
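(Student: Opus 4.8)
The plan is to show that $E_Z$ is locally free by examining its behaviour at each point of $\PP^2$, using the extension sequence \eqref{exseq2} together with the local structure of the ideal sheaf $I_Z(1)$. Since local freeness is a local property and $V^{\vee}\otimes\OO_{\PP^2}$ is already locally free, the sequence \eqref{exseq2} shows that the only potential failure of local freeness for $E_Z$ comes from the non-locally-free locus of $I_Z(1)$, namely the support of $Z$. So the real content is to check that the extension has been chosen so as to ``smooth out'' the singularities of $I_Z(1)$ exactly at the points of $Z$.

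First I would reduce to a purely local statement: at a point $x\notin \supp(Z)$ the sheaf $I_Z(1)$ is free, hence $E_Z$ is an extension of free modules and therefore free there. At a point $x\in\supp(Z)$, I would pass to the local ring $R=\OO_{\PP^2,x}$ and analyze the stalk. The key homological input is that $I_Z(1)$ has projective dimension $1$ as an $\OO_{\PP^2}$-module (its homological dimension is $2 - \mathrm{depth}$, and one can also see this from the Hilbert--Burch resolution of $I_Z$), so $E_Z$ being locally free is equivalent to the extension class trivializing the obstruction; concretely, $E_Z$ is locally free iff $\mathcal{E}xt^1(E_Z,\OO_{\PP^2})=0$ and $\mathcal{E}xt^i$ vanishes for $i\geq 2$ automatically by dimension reasons. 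Applying $\mathcal{H}om(-,\OO_{\PP^2})$ to \eqref{exseq2} gives a long exact sequence relating $\mathcal{E}xt^\bullet(E_Z,\OO_{\PP^2})$ to $\mathcal{E}xt^\bullet(I_Z(1),\OO_{\PP^2})$ and $\mathcal{E}xt^\bullet(V^\vee\otimes\OO_{\PP^2},\OO_{\PP^2})$; the latter is concentrated in degree $0$, so the crucial map to understand is the connecting homomorphism $\mathcal{H}om(V^\vee\otimes\OO_{\PP^2},\OO_{\PP^2}) \to \mathcal{E}xt^1(I_Z(1),\OO_{\PP^2})$, i.e. $V\otimes\OO_{\PP^2}\to \mathcal{E}xt^1(I_Z(1),\OO_{\PP^2})$, induced by the class $e$.

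The heart of the argument is therefore to show that this connecting map is surjective (indeed an isomorphism, since $\mathcal{E}xt^1(I_Z(1),\OO_{\PP^2})$ is a length-three sheaf supported on $Z$ and, by \eqref{obs}, $H^0$ of it is $3$-dimensional). This is exactly what the ``universal'' choice of $e$ buys us: $e$ corresponds to $\id_V$ under $\Ext^1(I_Z(1),V^\vee\otimes\OO_{\PP^2})\cong\Hom(V,V)$, and unwinding this identification (via the local-to-global spectral sequence, using that $H^1(\PP^2, \mathcal{H}om(I_Z(1),\OO_{\PP^2})\otimes V^\vee)$ and the relevant $H^2$ term vanish, so that $\Ext^1(I_Z(1),V^\vee\otimes\OO_{\PP^2}) = H^0(\PP^2,\mathcal{E}xt^1(I_Z(1),\OO_{\PP^2})\otimes V^\vee)$) shows precisely that the induced sheaf map $V\otimes\OO_{\PP^2}\to\mathcal{E}xt^1(I_Z(1),\OO_{\PP^2})$ is the evaluation/identity map, hence surjective. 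Granting this, the long exact sequence of $\mathcal{E}xt^\bullet(-,\OO_{\PP^2})$ forces $\mathcal{E}xt^1(E_Z,\OO_{\PP^2})=0$ (and higher $\mathcal{E}xt$ vanish for dimension reasons once $\mathcal{E}xt^1$ does, since $\mathcal{E}xt^2(I_Z(1),\OO_{\PP^2})=0$ as $Z$ has codimension $2$ but is locally complete intersection of the expected codimension), and a sheaf $F$ on a smooth surface with $\mathcal{E}xt^{\geq 1}(F,\OO)=0$ is locally free.

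The main obstacle I expect is the careful bookkeeping in the second paragraph's reduction and the identification of the connecting map with the evaluation map: one must be sure that the local-to-global spectral sequence degenerates in the needed range so that the extension class $e$ literally \emph{is} the global section of $\mathcal{E}xt^1(I_Z(1),\OO_{\PP^2})\otimes V^\vee$ corresponding to $\id_V$, and then that ``$\id_V$'' translates into surjectivity of the sheaf map. An alternative, possibly cleaner route would be to argue pointwise: restrict \eqref{exseq2} to a point $x\in\supp Z$, use $\mathrm{Tor}$-sequences to compute the fiber dimension $\dim_{\mathbb{C}} E_Z\otimes k(x)$, and show it equals $\rk(E_Z)=4$ everywhere; the jump in $\dim I_Z(1)\otimes k(x)$ at points of $Z$ is exactly compensated by the $\mathrm{Tor}_1$ contribution coming from the extension class, again because $e$ was chosen universally. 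Either way, the non-trivial point is that genericity is not needed — the universal extension works for \emph{every} $Z\in\HP$ — and that is what the identity-endomorphism choice of $e$ guarantees.
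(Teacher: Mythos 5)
Your proof is correct, but it takes a genuinely different route from the paper. The paper tensors the sequence \eqref{exseq2} by $\omega_{\PP^2}$ and then simply invokes Tyurin's criterion \cite[Lemma 1.2.]{tyu}, whose hypothesis is the numerical condition $h^1(\PP^2,I_{Z'}(-2))=\mathrm{length}(Z')<3=h^1(\PP^2,I_Z(-2))$ for every proper subscheme $Z'\subsetneq Z$ --- a one-line verification. What you do is essentially reprove that criterion in this special case: you identify the connecting map $V\otimes\OO_{\PP^2}\to\mathcal{E}xt^1(I_Z(1),\OO_{\PP^2})$ of the local-$\mathcal{E}xt$ sequence with the evaluation map on global sections of a zero-dimensional sheaf (using the degeneration of the local-to-global spectral sequence, which holds here because $\mathcal{H}om(I_Z(1),\OO_{\PP^2})\cong\OO_{\PP^2}(-1)$ has no $H^1$ or $H^2$), and surjectivity is then automatic since any zero-dimensional coherent sheaf is globally generated; this kills $\mathcal{E}xt^1(E_Z,\OO_{\PP^2})$, and $\mathcal{E}xt^2(E_Z,\OO_{\PP^2})$ vanishes because $\mathcal{E}xt^2(I_Z(1),\OO_{\PP^2})\cong\mathcal{E}xt^3(\OO_Z(1),\OO_{\PP^2})=0$ on a surface. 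Two small points of hygiene: your parenthetical ``indeed an isomorphism'' can only refer to the induced map $V\to H^0(\mathcal{E}xt^1(I_Z(1),\OO_{\PP^2}))$, not to the sheaf map itself (the source is locally free, the target is torsion); and the vanishing of $\mathcal{E}xt^2(I_Z(1),\OO_{\PP^2})$ needs no local complete intersection hypothesis on $Z$ (which can fail for length-three subschemes, e.g.\ $I_Z=\mathfrak{m}_x^2$) --- it follows from $I_Z$ being torsion-free of depth $\geq 1$, or from global dimension two. The trade-off is clear: the paper's argument is shorter but outsources the content to a citation, while yours is self-contained and makes visible exactly why the identity-endomorphism choice of the extension class works uniformly for \emph{every} $Z\in\HP$.
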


\begin{proof}
Tensor the exact sequence \eqref{exseq2} with $\omega_{\PP^2}$:
\begin{equation*}
	\begin{tikzcd}
		0 \arrow{r} & V^{\vee}\otimes\omega_{\PP^2} \arrow{r} & E_Z\otimes \omega_{\PP^2} \arrow{r} & I_Z(-2) \arrow{r} & 0 
	\end{tikzcd}.
\end{equation*}
Now for every subscheme $Z'\subsetneq Z$ of length $0\leq d <3$ we have
\begin{equation*}
	h^1(\PP^2,I_{Z'}(-2))<h^1(\PP^2,I_{Z}(-2)),
\end{equation*}
which by \cite[Lemma 1.2.]{tyu} implies that $E_Z\otimes \omega_{\PP^2}$ is locally free, hence so is $E_Z$. 
\end{proof}

We also have the following result concerning the stability of $E_Z$:
\begin{lem}
	The locally free sheaf $E_Z$ is $\mu$-stable.
\end{lem}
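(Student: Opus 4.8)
The plan is to verify $\mu$-stability of $E_Z$ directly by examining potential destabilizing subsheaves, using the defining extension \eqref{exseq2} together with the known stability-type properties of $I_Z(1)$. Recall that $E_Z$ has rank $4$, $c_1(E_Z) = 1$, so $\mu(E_Z) = 1/4$; hence a destabilizing subsheaf $F \subset E_Z$ of rank $s$ with $1 \le s \le 3$ would need $c_1(F).H \ge s/4$, i.e. $c_1(F) \ge 1$ since $c_1(F)$ is an integer. First I would reduce to the case where $F$ is saturated (so that $E_Z/F$ is torsion-free) and where $F$ is itself $\mu$-semistable, which is the standard reduction. Then I would look at the composition $F \hookrightarrow E_Z \twoheadrightarrow I_Z(1)$.

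The key dichotomy is whether this composition is zero. If it is zero, then $F$ factors through $V^{\vee}\otimes\OO_{\PP^2}$, so $F$ is a subsheaf of a trivial bundle and therefore satisfies $c_1(F) \le 0$; this contradicts $c_1(F) \ge 1$, so this case cannot produce a destabilizer. If the composition is nonzero, let $F' \subset I_Z(1)$ be its image, a nonzero subsheaf of rank $s' \le \min(s,1) = 1$, so $F'$ has rank exactly $1$ and $s' = 1$. Since $I_Z(1)$ is torsion-free of rank one, $F'$ is an ideal-twist $I_{W}(a)$ for some subscheme $W$ with $a \le 1$; in fact comparing with $I_Z(1)$ forces $a \le 1$, and if $a = 1$ then $F' = I_{Z'}(1)$ for some $Z' \supseteq$ (part of) $Z$, in particular $c_1(F') = 1$. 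The kernel of $F \twoheadrightarrow F'$ sits inside $V^\vee \otimes \OO_{\PP^2}$, hence has $c_1 \le 0$, and is of rank $s - 1$. Therefore $c_1(F) = c_1(F') + c_1(\ker) \le 1 + 0 = 1$, with equality only if the kernel has $c_1 = 0$ and $a = 1$. So the only way $F$ can destabilize (need $c_1(F) \ge 1$ together with $\mu(F) \ge \mu(E_Z) = 1/4$, i.e. $1/s \ge 1/4$ when $c_1(F) = 1$, which holds for all $s \le 4$) is the borderline configuration $c_1(F) = 1$; here $\mu(F) = 1/s \ge 1/4 = \mu(E_Z)$ with equality iff $s = 4$, i.e. $F = E_Z$. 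Thus any $F$ with $1 \le s \le 3$ and $c_1(F) = 1$ gives $\mu(F) > \mu(E_Z)$ and would genuinely destabilize, so these must be excluded.

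To rule out the remaining borderline subsheaves $F$ of rank $s \le 3$ with $c_1(F) = 1$, I would exploit the universality of the extension class $e$: it corresponds to $\id_V$ under $\Ext^1(I_Z(1), V^\vee \otimes \OO_{\PP^2}) \cong \Hom(V,V)$, which is precisely the condition that the connecting map $\Hom(V^\vee\otimes\OO_{\PP^2}, V^\vee\otimes\OO_{\PP^2}) \cong V^\vee \otimes V \to \Ext^1(I_Z(1), V^\vee\otimes\OO_{\PP^2})$, equivalently that no quotient $V^\vee \otimes \OO_{\PP^2} \twoheadrightarrow V'^\vee\otimes\OO_{\PP^2}$ (with $V' \subsetneq V$) splits off the pushed-out extension. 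Concretely, in the borderline case above we have $F' = I_{Z'}(1)$ with $Z' \subseteq Z$ and the sequence $0 \to K \to F \to I_{Z'}(1) \to 0$ with $K \subseteq V^\vee\otimes\OO_{\PP^2}$ of $c_1 = 0$, hence $K$ a full-rank trivial subbundle $V_1^\vee \otimes \OO_{\PP^2}$ up to codimension-two stuff; comparing second Chern classes and using $c_2(E_Z) = 3 = \text{length}(Z)$ forces $Z' = Z$ and $K = V^\vee\otimes\OO_{\PP^2}$, i.e. $F = E_Z$, contradicting $s \le 3$. The main obstacle is making this last Chern-class bookkeeping airtight --- in particular handling the possibility that $K$ is a proper (non-saturated) subsheaf of a trivial bundle with strictly positive $c_2$, which could in principle allow $\text{length}(Z') < 3$; I expect the cleanest route is to first saturate $F$ inside $E_Z$ (harmless, as saturation only increases $c_1$, so a saturated destabilizer still has $c_1 \ge 1$) and then to invoke the snake lemma on \eqref{exseq2} restricted to $F$, identifying $K = F \cap (V^\vee\otimes\OO_{\PP^2})$ as a saturated — hence reflexive, hence (on a surface) locally free — subsheaf of the trivial bundle with $c_1(K) = 0$, which by $\mu$-semistability of $\OO_{\PP^2}^{\oplus k}$ forces $K = V^\vee \otimes \OO_{\PP^2}$ outright, collapsing the argument.
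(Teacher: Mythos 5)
Your reduction to the borderline case is sound and is essentially the dual of the paper's own argument (you work with saturated subsheaves $F\subset E_Z$, the paper with torsion-free quotients), but the decisive step --- excluding a saturated subsheaf $F$ of rank $s\leq 3$ with $c_1(F)=1$ --- is exactly where the proposal breaks down, and both mechanisms you offer for it fail. First, no amount of Chern-class bookkeeping can exclude this case: the split extension $\OO_{\PP^2}^{\oplus 3}\oplus I_Z(1)$ has the same rank and Chern classes as $E_Z$ and admits the destabilizing saturated subsheaf $I_Z(1)$, which has precisely your borderline structure (with $K=0$, $s=1$, $Z'=Z$, so even the second Chern classes match); hence any argument that does not invoke the non-degeneracy of the extension class cannot succeed. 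Second, your closing claim that $\mu$-semistability of $\OO_{\PP^2}^{\oplus 3}$ forces the saturated subsheaf $K\subset V^{\vee}\otimes\OO_{\PP^2}$ with $c_1(K)=0$ to be all of $V^{\vee}\otimes\OO_{\PP^2}$ is false: $\OO_{\PP^2}^{\oplus 2}\subset\OO_{\PP^2}^{\oplus 3}$ is saturated, locally free, of $c_1=0$ and rank $2$. Semistability bounds the slope of such a subsheaf, not its rank; what is true is only that $K$ is a trivial direct summand $\OO_{\PP^2}^{\oplus(s-1)}$, which does not collapse the argument.

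You do name the correct missing ingredient --- the universality of the extension class --- but never actually deploy it. The way to finish, and this is what the paper does via Remark \ref{hom}, is to note that universality gives $\Hom(E_Z,\OO_{\PP^2})=0$ and then to examine the quotient $Q=E_Z/F$ rather than $F$ itself: $Q$ is torsion-free of rank $4-s\geq 1$ with $c_1(Q)=0$, it contains $(V^{\vee}\otimes\OO_{\PP^2})/K\cong\OO_{\PP^2}^{\oplus(4-s)}$ with cokernel a finite-length sheaf $T$, and since $\Hom(T,\OO_{\PP^2})=\Ext^1(T,\OO_{\PP^2})=0$ for a finite-length sheaf on a smooth surface, one gets $\Hom(Q,\OO_{\PP^2})\cong\mathbb{C}^{4-s}\neq 0$, hence a nonzero map $E_Z\to\OO_{\PP^2}$, a contradiction. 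With that substitution your argument closes; as written, the borderline case --- which is the entire content of the lemma --- remains open.
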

\begin{proof}
This follows from a more general result, see \cite[Lemma 1.4.]{naka}. But in this situation we can also give a direct proof:

Let $F$ be a torsion free quotient of $E_Z$ with $1\leq \rk(F)\leq 3$, then there is the following commutative diagram:
	\begin{equation*}
	\begin{tikzcd}
		0 \arrow{r} & \mathcal{O}_{\mathbb{P}^2}^{\oplus 3} \arrow{r}\arrow{d} & E_{Z} \arrow{r}\arrow{d} & I_{Z}(1) \arrow{r}\arrow{d} & 0\\
		0 \arrow{r} & F_0 \arrow{r} & F \arrow{r} & F_1 \arrow{r} & 0  
	\end{tikzcd}
\end{equation*}
with $F_0=\Ima(\mathcal{O}_{\mathbb{P}^2}^{\oplus 3}\hookrightarrow E_Z \rightarrow F)$. Thus all vertical arrows are surjective. Since $F_0$ is a quotient of a free sheaf we have $c_1(F_0).H\geq 0$. Furthermore $\rk(F_1)\in \left\lbrace0,1 \right\rbrace$ as $F_1$ is a quotient of a torsion free sheaf of rank 1. We distinguish two cases.

\underline{Case $\rk(F_1)=1$:}

In this case $F_1\cong I_Z(1)$ and hence $c_1(F).H=(c_1(F_0)+c_1(F_1)).H \geq 1$. This implies
\begin{equation*}
	\mu(F)=\frac{c_1(F).H}{\rk(F)}\geq \frac{1}{3} > \frac{1}{4} =\mu(E_Z).
\end{equation*}

\underline{Case $\rk(F_1)=0$:} 

In this case we have $c_1(F_1).H\geq 0$ as $F_1$ is a torsion sheaf. The only critical case is $c_1(F_0)=c_1(F_1)=0$, since otherwise $c_1(F)=d\geq 1$ and thus $\mu(F)\geq \frac{1}{3} > \frac{1}{4} =\mu(E_Z)$.

So assume $c_1(F_0)=c_1(F_1)=0$. Then $F_0$ is trivial itself, see for example \cite[p. 302]{laz}, and $F_1$ is supported in finitely many points. This implies 
\begin{equation*}
\Hom(F,\OO_{\PP^2})\cong \mathbb{C}^{\rk(F_0)}.
\end{equation*}
On the other hand $\Hom(F,\OO_{\PP^2})\hookrightarrow \Hom(E_Z,\OO_{\PP^2})=0$ by Remark \ref{hom}. This shows $\rk(F_0)=0$ and hence $\rk(F)=0$. So for $\rk(F)\geq 1$ the case $c_1(F_0)=c_1(F_1)=0$ cannot occur and $E_Z$ is stable.
\end{proof}
The last two lemmas show:
\begin{cor}\label{cor}
	For every $Z\in \HP$ the sheaf $E_Z$ defines a point $\left[E_Z \right]\in \mathcal{M}_{\PP^2}(4,1,3)$.  
\end{cor}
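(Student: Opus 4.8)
The plan is to assemble the corollary directly from the three preceding lemmas. We have produced, for each $Z\in\HP$, a coherent sheaf $E_Z$ sitting in the universal extension sequence \eqref{exseq2}. The previous two lemmas established exactly the two properties that the moduli functor requires: first, that $E_Z$ is locally free on $\PP^2$, and second, that $E_Z$ is $\mu$-stable. It remains only to check that the numerical invariants of $E_Z$ are the ones defining $\mathcal{M}_{\PP^2}(4,1,3)$, i.e.\ that $\rk(E_Z)=4$, $c_1(E_Z)=1$, and $c_2(E_Z)=3$.

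For the numerical check I would read the Chern classes off the exact sequence \eqref{exseq2}. Since $\dim_{\mathbb{C}}V=3$ by \eqref{obs}, the subsheaf $V^{\vee}\otimes\OO_{\PP^2}$ is $\OO_{\PP^2}^{\oplus 3}$, which contributes rank $3$ and trivial Chern classes, while the quotient $I_Z(1)$ has rank $1$, first Chern class $1$, and (using $\mathrm{length}(Z)=3$) second Chern class $c_2(I_Z(1))=3$. Whitney's formula $c(E_Z)=c(\OO_{\PP^2}^{\oplus 3})\cdot c(I_Z(1))$ then gives $\rk(E_Z)=3+1=4$, $c_1(E_Z)=0+1=1$, and $c_2(E_Z)=0\cdot 1+0+3=3$, as desired. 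These are routine computations in the Chow ring of $\PP^2$, identical in spirit to the Chern class comparison already used in the proof of Lemma \ref{exseq1}.

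Combining the three points — $E_Z$ locally free, $E_Z$ $\mu$-stable, and $(\rk,c_1,c_2)(E_Z)=(4,1,3)$ — the sheaf $E_Z$ satisfies all the defining conditions of the moduli problem, and since by the first lemma of section \ref{sect1} there are no strictly semistable sheaves with this numerical data, $E_Z$ is in fact a stable point and hence $\left[E_Z\right]\in\mathcal{M}_{\PP^2}(4,1,3)$. There is essentially no obstacle here: the corollary is a bookkeeping statement packaging the nontrivial work already done in the two lemmas, with the only new input being the elementary Chern class calculation.
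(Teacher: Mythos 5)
Your proposal is correct and matches the paper's approach: the paper derives the corollary directly from the two preceding lemmas (local freeness and $\mu$-stability of $E_Z$), with the numerical invariants $(\rk,c_1,c_2)=(4,1,3)$ being implicit in the extension \eqref{exseq2}, exactly the Whitney-formula computation you spell out. Your version merely makes this bookkeeping explicit.
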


We want to put the $\mu$-stable locally free sheaves $E_Z$ in a family classified by $\HP$. To do this we need the following maps:
\begin{equation*}
	\begin{tikzcd}[column sep=small]
		\mathcal{Z}\arrow[hookrightarrow]{r} & \PP^2\times\HP \arrow[rightarrow]{dl}[swap]{p} \arrow[rightarrow]{dr}{q} & \\
		\PP^2  &&  \HP
	\end{tikzcd}
\end{equation*} 
where $\mathcal{Z}$ is the universal family of length 3 subschemes.

\begin{rem}
Recall that for any coherent sheaf $F$ on $\mathbb{P}^2$ there is the associated coherent \emph{tautological sheaf} $F^{[3]}$ on $\HP$ defined  by
 \begin{equation*}
 	F^{[3]}:=q_{*}\left(p^{*}F\otimes \OO_{\mathcal{Z}} \right). 
 \end{equation*}
If $F$ is locally free of rank $r$ then $F^{[3]}$ is locally free of rank $3r$.
\end{rem}
To construct the family of stable sheaves, we first put the $\Ext^1(I_Z(1),\OO_{\PP^2})$ for $Z\in \HP$ in a family:
\begin{lem}\label{rext}
	The first relative $\Ext$-sheaf $\mathcal{V}:=\Rex(\mathcal{I}_{\mathcal{Z}}\otimes p^{*}\OO_{\PP^2}(1),\OO_{\PP^2\times \HP})$ is a locally free sheaf of rank three on $\HP$. It commutes with base change and there is an isomorphism
	\begin{equation}\label{reliso}
		\Rex(\mathcal{I}_{\mathcal{Z}}\otimes p^{*}\OO_{\PP^2}(1),\OO_{\PP^2\times \HP})^{\vee} \cong \OO_{\PP^2}(-2)^{[3]}.
	\end{equation} 
\end{lem}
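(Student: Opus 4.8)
The plan is to identify $\mathcal{V}$ directly as a relative $\Ext$-sheaf computed by relative Grothendieck--Serre duality for the smooth projective morphism $q\colon\PP^2\times\HP\to\HP$, whose relative dualizing complex is $\omega_q^{\bullet}=p^{*}\omega_{\PP^2}[2]=p^{*}\OO_{\PP^2}(-3)[2]$. The computation splits into two steps: first evaluate $Rq_{*}\bigl(\mathcal{I}_{\mathcal{Z}}\otimes p^{*}\OO_{\PP^2}(-2)\bigr)$, which will turn out to be $\OO_{\PP^2}(-2)^{[3]}$ placed in cohomological degree $1$; then feed this through duality to obtain $Rq_{*}\,R\mathcal{H}om\bigl(\mathcal{I}_{\mathcal{Z}}\otimes p^{*}\OO_{\PP^2}(1),\OO_{\PP^2\times\HP}\bigr)$, read off the relative $\Ext$-sheaves, and dualize.

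For the first step I would tensor the structure sequence $0\to\mathcal{I}_{\mathcal{Z}}\to\OO_{\PP^2\times\HP}\to\OO_{\mathcal{Z}}\to0$ with the line bundle $p^{*}\OO_{\PP^2}(-2)$ and apply $Rq_{*}$. Since $H^{i}(\PP^2,\OO_{\PP^2}(-2))=0$ for every $i$, the middle term contributes nothing, and since $\mathcal{Z}\to\HP$ is finite, $\OO_{\mathcal{Z}}\otimes p^{*}\OO_{\PP^2}(-2)$ has no higher direct images along $q$. Hence the long exact sequence degenerates to $R^{i}q_{*}\bigl(\mathcal{I}_{\mathcal{Z}}\otimes p^{*}\OO_{\PP^2}(-2)\bigr)=0$ for $i\neq1$ together with
\begin{equation*}
R^{1}q_{*}\bigl(\mathcal{I}_{\mathcal{Z}}\otimes p^{*}\OO_{\PP^2}(-2)\bigr)\ \cong\ q_{*}\bigl(p^{*}\OO_{\PP^2}(-2)\otimes\OO_{\mathcal{Z}}\bigr)\ =\ \OO_{\PP^2}(-2)^{[3]},
\end{equation*}
which is locally free of rank $3$; equivalently $Rq_{*}\bigl(\mathcal{I}_{\mathcal{Z}}\otimes p^{*}\OO_{\PP^2}(-2)\bigr)\cong\OO_{\PP^2}(-2)^{[3]}[-1]$.

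For the second step note that $\mathcal{I}_{\mathcal{Z}}\otimes p^{*}\OO_{\PP^2}(1)$ is a perfect complex on the smooth variety $\PP^2\times\HP$ and is flat over $\HP$ since $\mathcal{Z}$ is, hence $q$-perfect, so Grothendieck--Serre duality applies. Using the elementary twist-and-shift identity $R\mathcal{H}om\bigl(\mathcal{I}_{\mathcal{Z}}\otimes p^{*}\OO_{\PP^2}(-2),\omega_q^{\bullet}\bigr)\cong R\mathcal{H}om\bigl(\mathcal{I}_{\mathcal{Z}}\otimes p^{*}\OO_{\PP^2}(1),\OO_{\PP^2\times\HP}\bigr)[2]$, duality yields
\begin{equation*}
Rq_{*}\,R\mathcal{H}om\bigl(\mathcal{I}_{\mathcal{Z}}\otimes p^{*}\OO_{\PP^2}(1),\OO_{\PP^2\times\HP}\bigr)\ \cong\ R\mathcal{H}om_{\HP}\!\bigl(Rq_{*}\bigl(\mathcal{I}_{\mathcal{Z}}\otimes p^{*}\OO_{\PP^2}(-2)\bigr),\OO_{\HP}\bigr)[-2].
\end{equation*}
Plugging in the first step, the right-hand side is $R\mathcal{H}om_{\HP}\bigl(\OO_{\PP^2}(-2)^{[3]}[-1],\OO_{\HP}\bigr)[-2]\cong\bigl(\OO_{\PP^2}(-2)^{[3]}\bigr)^{\vee}[-1]$. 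Comparing cohomology sheaves shows that $\mathcal{E}xt^{i}_{q}\bigl(\mathcal{I}_{\mathcal{Z}}\otimes p^{*}\OO_{\PP^2}(1),\OO_{\PP^2\times\HP}\bigr)$ vanishes for $i\neq1$ and that $\mathcal{V}=\Rex\bigl(\mathcal{I}_{\mathcal{Z}}\otimes p^{*}\OO_{\PP^2}(1),\OO_{\PP^2\times\HP}\bigr)\cong\bigl(\OO_{\PP^2}(-2)^{[3]}\bigr)^{\vee}$; hence $\mathcal{V}$ is locally free of rank $3$ and $\mathcal{V}^{\vee}\cong\OO_{\PP^2}(-2)^{[3]}$, which is \eqref{reliso}. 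Compatibility with base change comes for free: $Rq_{*}\,R\mathcal{H}om$ of a $q$-perfect complex into $\OO_{\PP^2\times\HP}$ commutes with arbitrary base change, and since the answer is a shifted vector bundle so does its single nonzero cohomology sheaf $\mathcal{V}$ (alternatively, the entire argument runs verbatim after any base change $T\to\HP$).

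I expect the one genuinely delicate point to be the formalism of relative duality: pinning down the relative dualizing complex $p^{*}\OO_{\PP^2}(-3)[2]$, invoking Grothendieck duality in the form $Rq_{*}R\mathcal{H}om(F^{\bullet},q^{!}\OO_{\HP})\cong R\mathcal{H}om_{\HP}(Rq_{*}F^{\bullet},\OO_{\HP})$ for the $q$-perfect complex $F^{\bullet}=\mathcal{I}_{\mathcal{Z}}\otimes p^{*}\OO_{\PP^2}(-2)$, and keeping track of twists and shifts carefully enough that the power of $\OO_{\PP^2}$ that emerges is exactly $-2$. The rest — the collapse of the two long exact sequences and recognising $q_{*}\bigl(p^{*}\OO_{\PP^2}(-2)\otimes\OO_{\mathcal{Z}}\bigr)$ as the tautological bundle $\OO_{\PP^2}(-2)^{[3]}$ by definition — is routine. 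One could also avoid derived duality entirely, deriving local freeness and base change from the fibrewise vanishing $\Hom(I_Z(1),\OO_{\PP^2})=\Ext^{2}(I_Z(1),\OO_{\PP^2})=0$ (Serre duality on $\PP^2$) together with \eqref{obs} and the base-change theorem for relative $\Ext$-sheaves, and then obtaining \eqref{reliso} from relative Serre duality $\mathcal{E}xt^{1}_{q}(\mathcal{I}_{\mathcal{Z}}\otimes p^{*}\OO_{\PP^2}(1),\OO)\cong\mathcal{E}xt^{1}_{q}(\OO,\mathcal{I}_{\mathcal{Z}}\otimes p^{*}\OO_{\PP^2}(-2))^{\vee}$, which again reduces to the first step.
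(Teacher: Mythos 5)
Your argument is correct, and its computational core coincides with the paper's: both proofs use relative Serre duality along $q$ to trade $\Rex(\mathcal{I}_{\mathcal{Z}}\otimes p^{*}\OO_{\PP^2}(1),\OO_{\PP^2\times\HP})$ for the dual of $R^1q_{*}(\mathcal{I}_{\mathcal{Z}}\otimes p^{*}\OO_{\PP^2}(-2))$, and both identify the latter with $\OO_{\PP^2}(-2)^{[3]}$ via the structure sequence of $\mathcal{Z}$ together with the vanishing of $H^{*}(\PP^2,\OO_{\PP^2}(-2))$. The genuine difference is in how local freeness and base change are obtained. The paper establishes these \emph{first}, by feeding the constancy of $Z\mapsto\ext^1(I_Z(1),\OO_{\PP^2})=3$ from \eqref{obs} into the B\u{a}nic\u{a}--Putinar--Schumacher base change theorem, and only then applies Kleiman's relative duality sheaf by sheaf; your main route instead runs the full derived Grothendieck duality $Rq_{*}R\mathcal{H}om(F,q^{!}\OO_{\HP})\cong R\mathcal{H}om(Rq_{*}F,\OO_{\HP})$ and reads off local freeness, the vanishing of $\mathcal{E}xt^{i}_{q}$ for $i\neq 1$, and base change from the fact that the total pushforward is a shifted vector bundle. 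Your shifts check out ($q^{!}\OO_{\HP}=p^{*}\OO_{\PP^2}(-3)[2]$, and $R\mathcal{H}om(\OO_{\PP^2}(-2)^{[3]}[-1],\OO_{\HP})[-2]\cong(\OO_{\PP^2}(-2)^{[3]})^{\vee}[-1]$). What you gain is that \eqref{obs} is not needed as an input and you get the vanishing of all other relative $\Ext$-sheaves for free; what you pay is the heavier duality formalism and the (correct, but worth stating carefully) identification of the cohomology sheaves of $Rq_{*}R\mathcal{H}om$ with the relative $\Ext$-sheaves in the sense of the base change literature the paper cites. Your closing alternative, avoiding derived duality via fibrewise vanishing and the base change theorem for relative $\Ext$-sheaves, is essentially the paper's proof.
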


\begin{proof}
The morphism $q$ is proper and flat and the map
\begin{equation*}
	\phi: \HP \rightarrow \mathbb{N},\,\,\, Z \mapsto \ext^1(I_Z(1),\OO_{\PP^2})
\end{equation*}
is constant due to \eqref{obs}. So by \cite[Satz 3.]{schu} the first relative $\Ext$-sheaf
is locally free of rank three on $\HP$ and commutes with base change, that is for every $Z\in \HP$ we have
\begin{equation*}
	\Rex(\mathcal{I}_{\mathcal{Z}}\otimes p^{*}\OO_{\PP^2}(1),\OO_{\PP^2\times \HP})\otimes k(Z) \cong \Ext^1(I_Z(1),\OO_{\PP^2}).
\end{equation*}
	Using relative Serre duality, see \cite[Corollary(24)]{klei}, gives an isomorphism
	\begin{align*}
	\Rex(\mathcal{I}_{\mathcal{Z}}\otimes p^{*}\OO_{\PP^2}(1),\OO_{\PP^2\times \HP})&\cong \Rex(\mathcal{I}_{\mathcal{Z}}\otimes p^{*}\OO_{\PP^2}(-2),\omega_{q})\\
	&\cong \mathcal{H}om(R^1q_{*}(\mathcal{I}_{\mathcal{Z}}\otimes p^{*}\OO_{\PP^2}(-2)),\OO_{\HP}).
	\end{align*}
The exact sequence 
	\begin{equation*}
		\begin{tikzcd}
			0 \arrow{r} & \mathcal{I}_{\mathcal{Z}}\otimes p^{*}\OO_{\PP^2}(-2) \arrow{r} & p^{*}\OO_{\PP^2}(-2) \arrow{r} & \OO_{\mathcal{Z}}\otimes p^{*}\OO_{\PP^2}(-2) \arrow{r} & 0 
		\end{tikzcd}
	\end{equation*}
and standard cohomology and base change results, see \cite[II.5.]{mum}, show that there is an isomorphism
\begin{equation*}
 q_{*}(\OO_{\mathcal{Z}}\otimes p^{*}\OO_{\PP^2}(-2))\cong R^1q_{*}(\mathcal{I}_{\mathcal{Z}}\otimes p^{*}\OO_{\PP^2}(-2)). 
\end{equation*}
We see that $R^1q_{*}(\mathcal{I}_{\mathcal{Z}}\otimes p^{*}\OO_{\PP^2}(-2))\cong \OO_{\PP^2}(-2)^{[3]}$ is locally free of rank three and thus we get the desired isomorphism \eqref{reliso}.
\end{proof}

As the main result of this section we can now construct the desired family:

\begin{thm}\label{fam1}
There is a locally free $\HP$-family $\mathcal{E}$ of $\mu$-stable locally free sheaves, given by the exact sequence
\begin{equation*}
	\begin{tikzcd}
		0 \arrow{r} & q^{*}\mathcal{V}^{\vee}  \arrow{r} & \mathcal{E} \arrow{r} & \mathcal{I}_{\mathcal{Z}}\otimes p^{*}\OO_{\PP^2}(1) \arrow{r} & 0, 
	\end{tikzcd}
\end{equation*}
i.e. for every $Z\in \HP$ the restriction to the fiber over $Z$ defines a point $\left[\mathcal{E}_Z \right]\in \mathcal{M}_{\PP^2}(4,1,3)$.	
\end{thm}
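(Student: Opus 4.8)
The plan is to construct the relative version of the universal extension \eqref{exseq2} over the base $\HP$ and then verify the claimed fiberwise property. First I would exhibit the extension class: using Lemma \ref{rext} together with the projection formula and (relative) adjunction, there is a chain of isomorphisms
\begin{equation*}
	\Ext^1_{\PP^2\times\HP}\bigl(\mathcal{I}_{\mathcal{Z}}\otimes p^{*}\OO_{\PP^2}(1),\, q^{*}\mathcal{V}^{\vee}\bigr) \cong \Hom_{\HP}\bigl(\mathcal{V}, \Rex(\mathcal{I}_{\mathcal{Z}}\otimes p^{*}\OO_{\PP^2}(1),\OO_{\PP^2\times\HP})\bigr) = \Hom_{\HP}(\mathcal{V},\mathcal{V}),
\end{equation*}
where the first step uses the local-to-global spectral sequence for relative $\Ext$ (the relevant $H^1$ of the $q_*$-term vanishes because $\Rex$ is the only nonzero relative $\Ext$-sheaf, being locally free and compatible with base change by Lemma \ref{rext}, while $q_*\mathcal{H}om = 0$ as $\Hom(I_Z(1),\OO_{\PP^2})=0$ for each $Z$). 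I would let $\mathcal{E}$ be the extension of $\mathcal{I}_{\mathcal{Z}}\otimes p^{*}\OO_{\PP^2}(1)$ by $q^{*}\mathcal{V}^{\vee}$ corresponding to $\id_{\mathcal{V}}$ under this identification, which gives the displayed exact sequence.

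Next I would check flatness and the fiberwise statement. Since $q^{*}\mathcal{V}^{\vee}$ is locally free and $\mathcal{I}_{\mathcal{Z}}\otimes p^{*}\OO_{\PP^2}(1)$ is flat over $\HP$ (as $\mathcal{Z}$ is flat over $\HP$), the extension $\mathcal{E}$ is flat over $\HP$; moreover $\mathcal{E}$ is locally free because it is an extension involving sheaves that are locally free in a neighborhood of each point, combined with the fiberwise local-freeness established below — more directly, one restricts to a fiber and applies the earlier lemma. The key point is that restricting the exact sequence to the fiber $\PP^2\times\{Z\}$ yields
\begin{equation*}
	\begin{tikzcd}
		0 \arrow{r} & V^{\vee}\otimes\OO_{\PP^2} \arrow{r} & \mathcal{E}_Z \arrow{r} & I_Z(1) \arrow{r} & 0,
	\end{tikzcd}
\end{equation*}
where I use that $q^{*}\mathcal{V}^{\vee}|_{\PP^2\times\{Z\}} = V^{\vee}\otimes\OO_{\PP^2}$ by base change (Lemma \ref{rext}) and that $\mathcal{I}_{\mathcal{Z}}\otimes p^{*}\OO_{\PP^2}(1)$ restricts to $I_Z(1)$ by flatness of $\mathcal{Z}$. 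One then verifies that the extension class of this restricted sequence is exactly the distinguished class $e$ defining $E_Z$ in \eqref{exseq2}: this is because the base-change map for the relative $\Ext$-sheaf sends $\id_{\mathcal{V}} \in \Hom_{\HP}(\mathcal{V},\mathcal{V})$ to $\id_V \in \Hom(V,V)$, and the formation of the extension class is compatible with restriction to a fiber (functoriality of the local-to-global spectral sequence under base change). Hence $\mathcal{E}_Z \cong E_Z$, and by Corollary \ref{cor} the sheaf $\mathcal{E}_Z = E_Z$ is $\mu$-stable, locally free, and defines a point $[\mathcal{E}_Z] \in \mathcal{M}_{\PP^2}(4,1,3)$.

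I expect the main obstacle to be the compatibility of the extension class with base change — that is, showing that the class of the restricted sequence is $e$ and not merely some extension of $I_Z(1)$ by $V^{\vee}\otimes\OO_{\PP^2}$ that happens to have the same source and target. This requires tracking the canonical isomorphism $\Rex \otimes k(Z) \cong \Ext^1(I_Z(1),\OO_{\PP^2})$ through the local-to-global spectral sequence and checking it is natural in the base, for which the vanishing $\Hom(I_Z(1),\OO_{\PP^2}) = 0$ (Remark \ref{hom}) and $R^i q_* \mathcal{H}om(\mathcal{I}_{\mathcal{Z}}\otimes p^{*}\OO_{\PP^2}(1),\OO_{\PP^2\times\HP}) = 0$ for $i \neq 1$ are essential, since they collapse the spectral sequence to a single term and make the identification of extension classes transparent. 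Everything else — flatness, local freeness, the shape of the restricted sequence — is routine once Lemma \ref{rext} is in hand.
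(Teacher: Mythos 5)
Your proposal is correct and follows essentially the same route as the paper: identify $\Ext^1(\mathcal{I}_{\mathcal{Z}}\otimes p^{*}\OO_{\PP^2}(1),q^{*}\mathcal{V}^{\vee})$ with $\Hom(\mathcal{V},\mathcal{V})$ via the relative-$\Ext$ spectral sequence and the vanishing of $\mathcal{E}xt^0_q$, take the extension corresponding to $\id_{\mathcal{V}}$, and check that it restricts fiberwise to the universal extension \eqref{exseq2}. The base-change compatibility of the extension class that you flag as the main obstacle is exactly the content of Lange's result on universal families of extensions, which the paper cites (\cite[Lemma 2.1.]{lan}), and the local freeness of $\mathcal{E}$ is concluded, as you indicate, from fiberwise local freeness plus flatness via \cite[Lemma 2.1.7.]{huy}.
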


\begin{proof}
For every $Z\in \HP$ we have $\Hom(I_Z(1),\OO_{\PP^2})=0$, so 
\begin{equation*}
	\mathcal{E}xt^0_q(\mathcal{I}_{\mathcal{Z}}\otimes p^{*}\OO_{\PP^2}(1),\OO_{\PP^2\times\HP})=q_{*}\mathcal{H}om(\mathcal{I}_{\mathcal{Z}}\otimes p^{*}\OO_{\PP^2}(1),\OO_{\PP^2\times\HP})=0.
\end{equation*}
Using this fact and the projection formula for relative $\Ext$-sheaves \cite[Lemma 4.1.]{lan}, the five term exact sequence of the spectral sequence
\begin{equation*}
	H^i(\HP,\mathcal{E}xt^j_q(\mathcal{I}_{\mathcal{Z}}\otimes p^{*}\OO_{\PP^2}(1),q^{*}\mathcal{V}^{\vee})) \Rightarrow \Ext^{i+j}(\mathcal{I}_{\mathcal{Z}}\otimes p^{*}\OO_{\PP^2}(1),q^{*}\mathcal{V}^{\vee})
\end{equation*}
reduces to an isomorphism
\begin{align*}
	\Ext^1(\mathcal{I}_{\mathcal{Z}}\otimes p^{*}\OO_{\PP^2}(1),q^{*}\mathcal{V}^{\vee})&\cong H^0(\HP,\mathcal{E}xt^1_q(\mathcal{I}_{\mathcal{Z}}\otimes p^{*}\OO_{\PP^2}(1),q^{*}\mathcal{V}^{\vee}))\\
	&\cong H^0(\HP,\mathcal{E}xt^1_q(\mathcal{I}_{\mathcal{Z}}\otimes p^{*}\OO_{\PP^2}(1),\OO_{\PP^2\times\HP})\otimes \mathcal{V}^{\vee})\\
	&\cong \Hom(\mathcal{V},\mathcal{V}).
\end{align*}
The identity $\id_{\mathcal{V}}$ gives rise to an extension on $\PP^2\times \HP$:
\begin{equation}\label{fam}
	\begin{tikzcd}
		0 \arrow{r} & q^{*}\mathcal{V}^{\vee}  \arrow{r} & \mathcal{E} \arrow{r} & \mathcal{I}_{\mathcal{Z}}\otimes p^{*}\OO_{\PP^2}(1) \arrow{r} & 0 
	\end{tikzcd}
\end{equation}
with $\mathcal{E}$ flat over $\HP$, since both other terms are. Restricting to the fiber over a point $Z\in \HP$ defines by flatness of $\mathcal{I}_{\mathcal{Z}}\otimes p^{*}\OO_{\PP^2}(1)$ a map
\begin{equation*}
	\Ext^1(\mathcal{I}_{\mathcal{Z}}\otimes p^{*}\OO_{\PP^2}(1),q^{*}\mathcal{V}^{\vee}) \rightarrow \Ext^1(I_Z(1),V^{\vee}\otimes\OO_{\PP^2}).
\end{equation*}
By \cite[Lemma 2.1.]{lan} the extension defined by $\id_{\mathcal{V}}$ restricts to the extension given by $\id_V$ on the fiber over $Z\in\HP$. Thus the pullback of \eqref{fam} to the fiber over $Z\in\HP$ is exactly the exact sequence \eqref{exseq2}, hence it defines a locally free sheaf classified by $\mathcal{M}_{\PP^2}(4,1,3)$. Using \cite[Lemma 2.1.7.]{huy} again, we see that $\mathcal{E}$ is itself locally free.	
\end{proof}

By the universal property of $\mathcal{M}_{\PP^2}(4,1,3)$ the family $\mathcal{E}$ comes with a classifying morphism 
\begin{equation*}
f_{\mathcal{E}}: \HP \rightarrow \mathcal{M}_{\PP^2}(4,1,3),\,\, Z \mapsto \left[\mathcal{E}_Z \right]. 
\end{equation*}
Furthermore there is $L\in \Pic(\HP)$ and an isomorphism
\begin{equation}\label{univ}
	(\id_{\PP^2}\times f_{\mathcal{E}})^{*}\mathcal{U}\otimes q^{*}L\cong \mathcal{E}.
\end{equation}

We need to study some properties of the morphism $f_{\mathcal{E}}$. For this we need:
\begin{lem}\label{iso}
	Assume $Z\in \HP$ is not collinear. If there is an isomorphism $\alpha: E_{Z'} \cong E_{Z}$ for some $Z'\in \HP$, then $Z=Z'$. 
\end{lem}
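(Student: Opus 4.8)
The plan is to recover the subscheme $Z$ from the sheaf $E_Z$ in an intrinsic way, so that an isomorphism $E_{Z'}\cong E_Z$ forces $Z'=Z$. The natural candidate for this recovery is the quotient $I_Z(1)$ appearing in the defining sequence \eqref{exseq2}. First I would argue that the subsheaf $V^{\vee}\otimes\OO_{\PP^2}\subset E_Z$ is canonically determined by $E_Z$: since $\Hom(I_Z(1),\OO_{\PP^2})=0$ and $\Hom(E_Z,\OO_{\PP^2})=0$ by Remark \ref{hom}, the relevant cohomology to examine is $H^0(\PP^2,E_Z)$ together with the evaluation map. Concretely, applying $\Hom(-,\OO_{\PP^2})$ is not the right move; instead one looks at global sections. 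From \eqref{exseq2} one gets $0\to V^{\vee}\to H^0(E_Z)\to H^0(I_Z(1))\to H^1(\PP^2,\OO_{\PP^2})^{\oplus 3}=0$, so $h^0(E_Z)=3+h^0(I_Z(1))$. When $Z$ is not collinear, $h^0(I_Z(1))=0$ (no line through three non-collinear points), hence $H^0(E_Z)=V^{\vee}$ has dimension exactly $3$ and the evaluation map $H^0(E_Z)\otimes\OO_{\PP^2}\to E_Z$ is precisely the inclusion $V^{\vee}\otimes\OO_{\PP^2}\hookrightarrow E_Z$.

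Given this, the cokernel of the evaluation map $H^0(E_Z)\otimes\OO_{\PP^2}\to E_Z$ is canonically $I_Z(1)$, and likewise the cokernel of $H^0(E_{Z'})\otimes\OO_{\PP^2}\to E_{Z'}$ is $I_{Z'}(1)$. An isomorphism $\alpha\colon E_{Z'}\xrightarrow{\sim}E_Z$ induces an isomorphism on global sections $H^0(\alpha)\colon H^0(E_{Z'})\xrightarrow{\sim}H^0(E_Z)$ compatible with the evaluation maps, hence descends to an isomorphism of the cokernels $I_{Z'}(1)\cong I_Z(1)$. Twisting by $\OO_{\PP^2}(-1)$ gives $I_{Z'}\cong I_Z$ as ideal sheaves, and two ideal sheaves of length-three subschemes of $\PP^2$ are isomorphic (as subsheaves of $\OO_{\PP^2}$, or even abstractly, since the isomorphism must send $I_{Z'}$ into $\OO_{\PP^2}$ up to scalar because $\Hom(I_{Z'},\OO_{\PP^2})=\mathbb{C}$) only if $Z=Z'$. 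This yields the claim.

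The step I expect to be the main obstacle is pinning down that the abstract isomorphism $I_{Z'}(1)\cong I_Z(1)$ really forces $Z=Z'$, i.e. handling the passage from an abstract sheaf isomorphism to an equality of subschemes. One must use that $\Hom(I_{Z'}(1),\OO_{\PP^2}(1))\cong\Hom(I_{Z'},\OO_{\PP^2})$ is one-dimensional, spanned by the natural inclusion, so any isomorphism $I_{Z'}(1)\cong I_Z(1)$ composed with the inclusion $I_Z(1)\hookrightarrow\OO_{\PP^2}(1)$ is, up to a nonzero scalar, again the natural inclusion of $I_{Z'}(1)$; comparing images inside $\OO_{\PP^2}(1)$ then gives $I_{Z'}=I_Z$ and hence $Z=Z'$. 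The non-collinearity hypothesis enters only through the vanishing $h^0(I_Z(1))=0$, which is exactly what makes $H^0(E_Z)$ intrinsically isomorphic to $V^{\vee}$; without it there would be an extra section and the evaluation map would no longer cut out $V^{\vee}\otimes\OO_{\PP^2}$ canonically.
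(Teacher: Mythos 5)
Your proof is correct and is essentially the paper's argument: both hinge on $h^0(\PP^2,I_Z(1))=0$ for non-collinear $Z$ to show that the trivial subsheaf $V^\vee\otimes\OO_{\PP^2}$ is preserved under $\alpha$, yielding an induced isomorphism $I_{Z'}(1)\cong I_Z(1)$ and hence $Z=Z'$ (the paper phrases this via the vanishing of the composite $\OO_{\PP^2}^{\oplus 3}\to E_{Z'}\to E_Z\to I_Z(1)$ rather than via canonicity of the evaluation map, but the content is the same). The one point you should make explicit is that your ``likewise'' for $E_{Z'}$ requires $h^0(I_{Z'}(1))=0$, which is not assumed but follows because $h^0(E_{Z'})=h^0(E_Z)=3$ forces $Z'$ to be non-collinear as well.
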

\begin{proof}
	We look at the following diagram:
	\begin{equation*}
		\begin{tikzcd}
			0 \arrow{r} & \mathcal{O}_{\mathbb{P}^2}^{\oplus 3} \arrow{r}{\iota} & E_{Z'} \arrow{r}\arrow{d}{\alpha}[swap]{\cong} & I_{Z'}(1) \arrow{r} & 0\\
			0 \arrow{r} & \mathcal{O}_{\mathbb{P}^2}^{\oplus 3} \arrow{r} & E_{Z} \arrow{r}{q} & I_{Z}(1) \arrow{r} & 0.
		\end{tikzcd}
	\end{equation*}
	Since $Z$ is not collinear the composition $\beta:=q\circ\alpha\circ\iota$ is zero. Consequently the free submodule of $E_{Z'}$ maps injectively to the free submodule of $E_Z$, which then must be an isomorphism, so we get in fact the following diagram:
	\begin{equation*}
		\begin{tikzcd}
			0 \arrow{r} & \mathcal{O}_{\mathbb{P}^2}^{\oplus 3} \arrow{r}{}\arrow{d}[swap]{\cong} & E_{Z'} \arrow{r}\arrow{d}{\alpha}[swap]{\cong} & I_{Z'}(1) \arrow{r}\arrow{d}[swap]{\cong} & 0\\
			0 \arrow{r} & \mathcal{O}_{\mathbb{P}^2}^{\oplus 3} \arrow{r} & E_{Z} \arrow{r}{} & I_{Z}(1) \arrow{r} & 0.
		\end{tikzcd}
	\end{equation*}
	Therefore there is an induced isomorphism $I_{Z'}(1)\cong I_Z(1)$ and so $Z=Z'$.
\end{proof}
Thus the non-collinear subschemes in $\HP$ define sheaves $E_Z$ with exactly three global sections. It makes sense to study the Brill-Noether-locus $S$ in $ \mathcal{M}_{\PP^2}(4,1,3)$:
\begin{equation*}
	S:=\left\lbrace \left[E \right]\in \mathcal{M}_{\PP^2}(4,1,3)\,|\, h^0(\PP^2,E)=4  \right\rbrace. 
\end{equation*} 
\begin{rem}
	We can write down the inverse $g$ to $f_{\mathcal{E}}$ on the complement of $S$:
	\begin{equation*}
		g: \mathcal{M}_{\PP^2}(4,1,3)\setminus S \rightarrow \HP,\,\,\, E \mapsto \supp(Q^{\vee\vee}/Q)
	\end{equation*}
where $Q$ is the cokernel of the (in this case) canonical evaluation map from Lemma \ref{exseq1}. 
\end{rem}
By \cite[Corollary, p.14, lines 3-5]{tyu} we get for the $E_Z$ with collinear subschemes $Z$:
\begin{lem}\label{niso}
	Assume $Z, Z'\in \HP$ are collinear with $Z\neq Z'$ such that there is a line $\ell\subset \PP^2$ containing both $Z$ and $Z'$, then $E_Z= E_{Z'}$.
\end{lem}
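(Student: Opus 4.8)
The plan is to show that for collinear $Z \neq Z'$ lying on a common line $\ell$, the universal extensions $E_Z$ and $E_{Z'}$ are isomorphic because they are both determined by data depending only on $\ell$, not on the individual length-three subscheme. The starting observation is that when $Z \subset \ell$, the ideal sheaf $I_Z(1)$ fits into the standard sequence $0 \to \OO_{\PP^2}(-\ell)(1) \to I_Z(1) \to I_{Z/\ell}(1)|_\ell \to 0$, i.e. $0 \to \OO_{\PP^2} \to I_Z(1) \to \OO_\ell \to 0$, since $\OO_{\PP^2}(-\ell) \cong \OO_{\PP^2}(-1)$ and a length-three subscheme of $\ell \cong \PP^1$ cut out by a section of $\OO_\ell(1)$ gives the structure sheaf $\OO_\ell$ (degree $1-3+\ldots$ — more precisely, $I_{Z/\ell}(1)$ on $\PP^1$ is $\OO_{\PP^1}(1-3) = \OO_{\PP^1}(-2)$, so one must be a little careful about which twist appears). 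Either way, the key point is that the torsion-free, non-locally-free part of $I_Z(1)$ along $\ell$ is a sheaf on $\ell$ that is \emph{independent of $Z$} up to the ambiguity recorded in the reference \cite[Corollary, p.14]{tyu}; that corollary is exactly the statement that the relevant $\Ext$-data stabilizes for collinear configurations.

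Concretely, I would invoke \cite[Corollary, p.14, lines 3-5]{tyu} as a black box: Tyurin's corollary asserts that for collinear length-three subschemes on a fixed line $\ell$, the universal extension sheaf $E_Z$ (equivalently, the bundle obtained by killing $H^1(I_Z(-2))$) depends only on $\ell$. So the proof is essentially a one-line citation once the setup matches: our $E_Z$ is by construction (sequence \eqref{exseq2}) the universal extension of $I_Z(1)$ by $V^\vee \otimes \OO_{\PP^2}$ with $V = \Ext^1(I_Z(1),\OO_{\PP^2})$, and this is precisely the object Tyurin analyzes. The only thing to check is that the hypotheses of that corollary (length three, common line, char zero) are met, which they are.

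If instead a self-contained argument is wanted, I would proceed as follows. Write $J_\ell := \ker(I_Z(1) \twoheadrightarrow \text{(torsion-free part on } \ell))$; one checks $J_\ell \cong \OO_{\PP^2}$ and the quotient $T_\ell := I_Z(1)/J_\ell$ is a rank-one torsion-free sheaf on $\ell$ depending only on $\ell$. Then $\Ext^1(I_Z(1), \OO_{\PP^2})$ sits in a sequence computed from $\Ext^\bullet(T_\ell,\OO_{\PP^2})$ and $\Ext^\bullet(\OO_{\PP^2},\OO_{\PP^2})$, and since $\Ext^i(\OO_{\PP^2},\OO_{\PP^2}) = 0$ for $i>0$, the identification $V \cong \Ext^2(T_\ell, \OO_{\PP^2}) \cong H^0(\ell, \ldots)^\vee$ shows $V$ and the extension class both pull back from data on $\ell$. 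The universal extension $E_Z$ then sits in $0 \to V^\vee \otimes \OO_{\PP^2} \to E_Z \to I_Z(1) \to 0$, and pushing $J_\ell \hookrightarrow I_Z(1)$ back gives a sub-bundle $\OO_{\PP^2}^{\oplus ?} \oplus (\text{universal ext of } T_\ell)$, exhibiting $E_Z$ as an extension built entirely from $\ell$-data; hence $E_Z \cong E_{Z'}$.

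The main obstacle is bookkeeping the twists and making the identification ``depends only on $\ell$'' precise — in particular, verifying that the extension class $e \in \Ext^1(I_Z(1), V^\vee \otimes \OO_{\PP^2})$ corresponding to $\id_V$ restricts compatibly under the change $Z \rightsquigarrow Z'$. Since the paper already cites \cite{tyu} for exactly this conclusion, the cleanest route is to lean on that corollary and keep the verification of hypotheses short.
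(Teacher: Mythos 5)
Your primary route---invoking \cite[Corollary, p.~14, lines 3--5]{tyu} after checking that the hypotheses (length three, common line) are satisfied---is exactly what the paper does: the lemma is stated as a direct consequence of that corollary, with no further argument given. The self-contained sketch you append is not part of the paper's proof and is left with loose ends (the unresolved twists, the unspecified ``sub-bundle'' obtained by pushing back $J_\ell$), but since you explicitly designate the citation as the intended proof, your proposal matches the paper's approach.
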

\begin{rem}
This shows that for a sheaf $E_Z$ with a collinear subscheme $Z\subset \PP^2$ we have
\begin{equation*}
	f_{\mathcal{E}}^{-1}(\left[ E_Z\right] )=\ell^{[3]}\cong \PP^3,
\end{equation*}
where $\ell\subset \PP^2$ is the line containing $Z$.
\end{rem}
The last two lemmas suggest that $f_{\mathcal{E}}$ is the blow up of $S$ in $\mathcal{M}_{\PP^2}(4,1,3)$. This is indeed the case since by \cite[5.29, Example 5.3.]{yosh} and we have:
\begin{lem}\label{bir}
	The Brill-Noether-locus $S$ in $\mathcal{M}_{\PP^2}(4,1,3)$ is isomorphic to $\PP^2$ and  there is an isomorphism $\HP \cong \Bl_S\mathcal{M}_{\PP^2}(4,1,3)$ such that $f_{\mathcal{E}}$ can be identified with the blow up of $S$ in $\mathcal{M}_{\PP^2}(4,1,3)$.
\end{lem}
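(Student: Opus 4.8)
The plan is to identify $f_{\mathcal{E}}$ with a blow-down in two stages: first pin down, set-theoretically, where $f_{\mathcal{E}}$ fails to be an isomorphism, and then feed the scheme-theoretic regularity statement in from Yoshioka. Write $\mathcal{M}$ for $\mathcal{M}_{\PP^2}(4,1,3)$. Both $\HP$ and $\mathcal{M}$ are smooth projective of dimension six (Lemma \ref{prop}), and the remark defining $g$ exhibits a morphism $g\colon \mathcal{M}\setminus S\to\HP$ inverse to $f_{\mathcal{E}}$ over the dense open $\mathcal{M}\setminus S$ (dense since $S$ is a proper closed subset of the irreducible $\mathcal{M}$). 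So $f_{\mathcal{E}}$ is birational and restricts to an isomorphism $\HP\setminus C\xrightarrow{\sim}\mathcal{M}\setminus S$, where $C\subset\HP$ is the closed locus of collinear length-three subschemes.

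Next I would describe $C$ and $S$. Sending a collinear $Z$ to the unique line $\ell$ it spans gives a morphism $C\to(\PP^2)^{\vee}$ onto the dual plane with fibres the $\ell^{[3]}\cong\PP^3$; hence $C$ is an irreducible smooth divisor in $\HP$, in fact a $\PP^3$-bundle over $(\PP^2)^{\vee}$. By Lemma \ref{niso} the map $f_{\mathcal{E}}$ is constant on each fibre $\ell^{[3]}$, so $f_{\mathcal{E}}|_C$ factors as $C\to(\PP^2)^{\vee}\to S$, while Lemma \ref{iso} shows no further identifications occur (and, as in the remark after Lemma \ref{niso}, $f_{\mathcal{E}}^{-1}$ of a point of $S$ is exactly one such $\ell^{[3]}$). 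Thus $S=f_{\mathcal{E}}(C)$ is the bijective image of $(\PP^2)^{\vee}$, a surface of codimension four in $\mathcal{M}$ over which $f_{\mathcal{E}}$ has all fibres isomorphic to $\PP^3=\PP^{4-1}$.

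Finally I would invoke \cite[5.29, Example 5.3]{yosh}: with its natural scheme structure $S$ is smooth and isomorphic to $\PP^2$, and $f_{\mathcal{E}}\colon\HP\to\mathcal{M}$ is the blow-up of $\mathcal{M}$ along $S$. Alternatively, granting that $S$ is smooth, one can finish directly: the ideal $f_{\mathcal{E}}^{-1}\mathcal{I}_S\cdot\OO_{\HP}$ is invertible — it cuts out the divisor $C$ — so the universal property of the blow-up produces a projective birational morphism $\HP\to\Bl_S\mathcal{M}$ of smooth projective sixfolds which is an isomorphism away from the exceptional divisors and restricts to a surjective, hence finite and degree-one, morphism $\PP^3\to\PP^3$ on each fibre over $S$; being quasi-finite and birational with normal target, it is an isomorphism by Zariski's main theorem, and it identifies $f_{\mathcal{E}}$ with the blow-down.

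Everything up to the set-theoretic picture of $C$ and $S$ is routine given the earlier lemmas. The genuine obstacle — and the reason Yoshioka's result is cited — is the scheme-theoretic input: that the Brill--Noether locus $S$ is actually smooth (not merely the bijective image of $\PP^2$) and that $f_{\mathcal{E}}$ realizes $\HP$ as the honest blow-up with this reduced smooth centre. Establishing this from first principles would require studying $S$ through the relative $\Ext$-sheaves (equivalently the normal bundle $N_{S/\mathcal{M}}$) along $S$ rather than fibrewise, which is precisely the computation carried out in \cite{yosh}.
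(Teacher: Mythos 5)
Your proposal takes essentially the same approach as the paper: the paper gives no independent argument for this lemma and derives it entirely from the citation \cite[5.29, Example 5.3.]{yosh}, exactly as you do, while your preliminary set-theoretic description of the contracted locus (via Lemmas \ref{iso} and \ref{niso} and the rational inverse $g$) matches the surrounding discussion in the text. You also correctly identify that the genuine content --- smoothness of $S\cong\PP^2$ and the scheme-theoretic identification of $f_{\mathcal{E}}$ with the blow-up --- is precisely what is being imported from Yoshioka rather than proved here.
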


\begin{rem}
This description goes back to Drezet who proved this in terms of Kronecker modules in \cite[Th\'eor\`eme 4.]{dre}.	
\end{rem}

\begin{cor}\label{coh}
For every locally free sheaf $F$ on $\mathcal{M}_{\PP^2}(4,1,3)$ we have isomorphisms
\begin{equation*}
H^i(\mathcal{M}_{\PP^2}(4,1,3),F)\cong H^i(\HP, f_{\mathcal{E}}^{*}F).
\end{equation*}
\end{cor}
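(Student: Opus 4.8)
The plan is to use the identification of $f_{\mathcal{E}}$ with the blow-up morphism $\Bl_S\mathcal{M}_{\PP^2}(4,1,3)\to\mathcal{M}_{\PP^2}(4,1,3)$ from Lemma \ref{bir}, together with the fact (Lemma \ref{bir} again) that the center $S$ is isomorphic to $\PP^2$, hence smooth, and that $\mathcal{M}_{\PP^2}(4,1,3)$ is a smooth projective variety of dimension six (Lemma \ref{prop}), while $S$ has codimension $6-2=4$ inside it. The key input is the standard fact that for the blow-up $\pi\colon \widetilde{X}=\Bl_Y X\to X$ of a smooth variety $X$ along a smooth subvariety $Y$, one has $\pi_{*}\OO_{\widetilde{X}}\cong\OO_X$ and $R^i\pi_{*}\OO_{\widetilde{X}}=0$ for $i>0$; this is \cite[Exercise III.8.1 or Ch. V]{hartshorne}-type material, but it follows from the description of the fibers of $\pi$ as projective spaces $\PP^{c-1}$ with $c=\operatorname{codim}_X Y$ and the vanishing $H^i(\PP^{c-1},\OO)=0$ for $i>0$ together with the theorem on formal functions (or from the explicit resolution of the exceptional divisor).

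First I would invoke the projection formula: since $F$ is locally free on $\mathcal{M}_{\PP^2}(4,1,3)$, we have
\begin{equation*}
Rf_{\mathcal{E}*}\bigl(f_{\mathcal{E}}^{*}F\bigr)\cong F\otimes^{\mathbf{L}} Rf_{\mathcal{E}*}\OO_{\HP}.
\end{equation*}
Then I would substitute $Rf_{\mathcal{E}*}\OO_{\HP}\cong\OO_{\mathcal{M}_{\PP^2}(4,1,3)}$, using the blow-up vanishing recalled above, to conclude $Rf_{\mathcal{E}*}(f_{\mathcal{E}}^{*}F)\cong F$. Finally, applying the Leray spectral sequence for $f_{\mathcal{E}}$ (which degenerates because $Rf_{\mathcal{E}*}(f_{\mathcal{E}}^{*}F)$ is concentrated in degree zero), or equivalently taking hypercohomology $R\Gamma$ of both sides, yields
\begin{equation*}
H^i\bigl(\HP,f_{\mathcal{E}}^{*}F\bigr)\cong H^i\bigl(\mathcal{M}_{\PP^2}(4,1,3),Rf_{\mathcal{E}*}(f_{\mathcal{E}}^{*}F)\bigr)\cong H^i\bigl(\mathcal{M}_{\PP^2}(4,1,3),F\bigr)
\end{equation*}
for all $i$, which is the claim.

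The only real point to be careful about is the justification of $Rf_{\mathcal{E}*}\OO_{\HP}\cong\OO_{\mathcal{M}_{\PP^2}(4,1,3)}$. One cannot merely cite "birational morphism of smooth projective varieties" — the rational-singularities argument does give $R^i\pi_{*}\OO=0$ for any resolution of a variety with rational (in particular smooth) singularities, so since $\mathcal{M}_{\PP^2}(4,1,3)$ is smooth and $f_{\mathcal{E}}$ is a projective birational morphism from the smooth variety $\HP$, the vanishing $R^if_{\mathcal{E}*}\OO_{\HP}=0$ for $i>0$ holds, and $f_{\mathcal{E}*}\OO_{\HP}=\OO_{\mathcal{M}_{\PP^2}(4,1,3)}$ follows from normality of the target together with the connectedness of the fibers (Zariski's main theorem). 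Alternatively, and more self-containedly, one appeals directly to Lemma \ref{bir}, which exhibits $f_{\mathcal{E}}$ as an honest blow-up along the smooth center $S\cong\PP^2$, so the fiber structure and the cohomology of projective space give the vanishing via the theorem on formal functions. I would present the rational-singularities version as the main line of argument since it is shortest, with the blow-up description as the underlying geometric reason. The remaining steps — projection formula and Leray — are entirely formal and require no further comment.
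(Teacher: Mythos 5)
Your proof is correct and takes essentially the same route as the paper: birationality of $f_{\mathcal{E}}$ from Lemma \ref{bir}, the vanishing $R^i(f_{\mathcal{E}})_{*}\OO_{\HP}=0$ for $i\geq 1$, the projection formula, and the Leray spectral sequence. The paper compresses this into a single sentence, and your extra care in justifying the higher direct image vanishing (via the smooth blow-up structure of Lemma \ref{bir}, or the rational-singularities argument) is exactly the intended content of that sentence.
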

\begin{proof}
Since $f_{\mathcal{E}}$ is birational by Lemma \ref{bir}, the result follows from $R^i\left(f_\mathcal{E} \right) _{*}\OO_{\HP}=0$ for $i\geq 1$, the projection formula and the Leray spectral sequence.
\end{proof}

\section{Computations}\label{sect3}
We want to understand the family $\mathcal{E}$ as a $\PP^2$-family, that is we want to understand the sheaves $\mathcal{E}_p$ on $\HP$ for $p\in \PP^2$. For this we first note that $\mathcal{Z}$ is not just flat over $\HP$ but also over $\PP^2$, see \cite[Theorem 2.1.]{krug1}, so restricting the exact sequence 
\begin{equation*}
	\begin{tikzcd}
		0 \arrow{r} & \mathcal{I}_{\mathcal{Z}}  \arrow{r} & \OO_{\PP^2\times\HP} \arrow{r} &  \OO_{\mathcal{Z}}\arrow{r} & 0 
	\end{tikzcd}
\end{equation*}
to the fiber over  point $p\in \PP^2$ gives the exact sequence
\begin{equation}\label{ide}
	\begin{tikzcd}
		0 \arrow{r} & I_{S_p}  \arrow{r} & \OO_{\HP} \arrow{r} &  \OO_{S_p} \arrow{r} & 0,
	\end{tikzcd}
\end{equation}
where $S_p:=\left\lbrace Z\in \HP\,|\, p\in \supp(Z) \right\rbrace$ is a codimension two subscheme in $\HP$. 

Since $\mathcal{Z}$ is flat over $\PP^2$ we see, using \cite[Examples 5.4 vi)]{huy2}, that $\OO_{S_p}=k(p)^{[3]}$ is the tautological sheaf on $\HP$ associated to the skyscraper sheaf $k(p)$ of the point $p\in \PP^2$. This implies we can use \cite[Theorem 3.17.,Remark 3.20.]{krug3} to find the following cohomology groups:
\begin{equation}\label{krug1}
	\ext^i(\OO_{\HP},\OO_{\PP^2}(-2)^{[3]})=0\,\,\text{for all $i$ and}\,\, \ext^i(\OO_{S_p},\OO_{\PP^2}(-2)^{[3]})=\begin{cases} 1 & i=2\\ 0 & i\neq 2\end{cases}
\end{equation}
as well as
\begin{equation}\label{krug2}
	\ext^i(\OO_{\PP^2}(-2)^{[3]},\OO_{\HP})=\begin{cases} 6 & i=0\\ 0 & i\geq 1\end{cases}\,\,\text{and}\,\, \ext^i(\OO_{\PP^2}(-2)^{[3]},\OO_{S_p})=\begin{cases} 7 & i=0\\ 0 & i\geq 1.\end{cases}
\end{equation}

Using these results we can prove:
\begin{lem}\label{comp}
	For $p\in \PP^2$ we have 
	\begin{equation*}
		\ext^i(I_{S_p},\OO_{\PP^2}(-2)^{[3]})=\begin{cases} 1 & i=1\\ 0 & i\neq 1\end{cases}\,\,\text{and}\,\,\ext^i(\OO_{\PP^2}(-2)^{[3]},I_{S_p})=\begin{cases} \geq 1 & i=1\\ 0 & i\geq 2.\end{cases}
	\end{equation*}
\end{lem}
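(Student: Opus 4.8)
The plan is to feed the short exact sequence \eqref{ide}
\begin{equation*}
	\begin{tikzcd}
		0 \arrow{r} & I_{S_p}  \arrow{r} & \OO_{\HP} \arrow{r} &  \OO_{S_p} \arrow{r} & 0
	\end{tikzcd}
\end{equation*}
into the two relevant $\Hom$-functors and to substitute the cohomology groups recorded in \eqref{krug1} and \eqref{krug2}; no serious input beyond those computations will be needed.

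For the first assertion I would apply the contravariant functor $\Hom(-,\OO_{\PP^2}(-2)^{[3]})$ to \eqref{ide}. By the first half of \eqref{krug1} every group $\Ext^i(\OO_{\HP},\OO_{\PP^2}(-2)^{[3]})$ occurring in the resulting long exact sequence vanishes, so the sequence degenerates into isomorphisms $\Ext^i(I_{S_p},\OO_{\PP^2}(-2)^{[3]})\cong\Ext^{i+1}(\OO_{S_p},\OO_{\PP^2}(-2)^{[3]})$ for all $i\geq 0$. The second half of \eqref{krug1}, which says that $\ext^j(\OO_{S_p},\OO_{\PP^2}(-2)^{[3]})$ equals $1$ for $j=2$ and $0$ otherwise, then immediately yields the claimed dimensions: $1$ for $i=1$ and $0$ for $i\neq 1$.

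For the second assertion I would apply the covariant functor $\Hom(\OO_{\PP^2}(-2)^{[3]},-)$ to \eqref{ide}. Here \eqref{krug2} tells us that $\Ext^i(\OO_{\PP^2}(-2)^{[3]},\OO_{\HP})$ and $\Ext^i(\OO_{\PP^2}(-2)^{[3]},\OO_{S_p})$ both vanish for $i\geq 1$, so for every $i\geq 2$ the term $\Ext^i(\OO_{\PP^2}(-2)^{[3]},I_{S_p})$ is squeezed between two zeros in the long exact sequence and hence vanishes. For $i=1$ the same long exact sequence identifies $\Ext^1(\OO_{\PP^2}(-2)^{[3]},I_{S_p})$ with the cokernel of the natural map $\Hom(\OO_{\PP^2}(-2)^{[3]},\OO_{\HP})\to\Hom(\OO_{\PP^2}(-2)^{[3]},\OO_{S_p})$ induced by $\OO_{\HP}\to\OO_{S_p}$; comparing the dimensions $6$ and $7$ provided by \eqref{krug2} forces this cokernel to have dimension at least one, which gives $\ext^1(\OO_{\PP^2}(-2)^{[3]},I_{S_p})\geq 1$.

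I do not anticipate a genuine obstacle: once \eqref{krug1} and \eqref{krug2} are available the argument is pure bookkeeping with two long exact sequences. The only spot that is not a literal substitution is the lower bound $\ext^1(\OO_{\PP^2}(-2)^{[3]},I_{S_p})\geq 1$, and even there the crude count $7-6=1$ suffices; determining the exact value would require analysing the evaluation-type map between the two $\Hom$-spaces, but since the lemma only asks for the inequality this is unnecessary.
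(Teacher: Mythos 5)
Your proposal is correct and follows essentially the same route as the paper: apply $\Hom(-,\OO_{\PP^2}(-2)^{[3]})$ and $\Hom(\OO_{\PP^2}(-2)^{[3]},-)$ to the sequence \eqref{ide} and read off the long exact sequences using \eqref{krug1} and \eqref{krug2}, with the lower bound $\ext^1(\OO_{\PP^2}(-2)^{[3]},I_{S_p})\geq 1$ coming from the count $7-6=1$ exactly as in the paper.
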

\begin{proof}
Applying $\Hom(-,\OO_{\PP^2}(-2)^{[3]})$ to \eqref{ide} we have $\Ext^6(I_{S_p},\OO_{\PP^2}(-2)^{[3]})=0$ as well as isomorphisms:
\begin{equation*}
	\Ext^i(I_{S_p},\OO_{\PP^2}(-2)^{[3]})\cong \Ext^{i+1}(\OO_{S_p},\OO_{\PP^2}(-2)^{[3]})\,\,\,\text{for $1\leq i\leq 5$},
\end{equation*}
since $\Ext^i(\OO_{\HP},\OO_{\PP^2}(-2)^{[3]})=0$ for all $i$ by \eqref{krug1}.
We also find $\Hom(I_{S_p},\OO_{\PP^2}(-2)^{[3]})=0$ by further using $\Ext^i(\OO_{S_p},\OO_{\PP^2}(-2)^{[3]})=0$ for $i=0,1$. This proves the first claim.

For the second claim we apply $\Hom(\OO_{\PP^2}(-2)^{[3]},-)$ to \eqref{ide}. As $\Ext^1(\OO_{\PP^2}(-2)^{[3]},\OO_{\HP})=0$ by \eqref{krug2} the first part of the long exact sequence gives
\begin{equation*}
	\begin{tikzcd}
		 0 \arrow{r} & \Hom(\OO_{\PP^2}(-2)^{[3]},I_{S_p}) \arrow{r} & \mathbb{C}^6  \arrow{r} & \mathbb{C}^7 \arrow{r} &  \Ext^1(\OO_{\PP^2}(-2)^{[3]},I_{S_p}) \arrow{r} & 0 
	\end{tikzcd}
\end{equation*}
which shows that $\ext^1(\OO_{\PP^2}(-2)^{[3]},I_{S_p})\geq 1$.
We also get isomorphisms
\begin{equation*}
	\Ext^i(\OO_{\PP^2}(-2)^{[3]},I_{S_p})\cong \Ext^{i-1}(\OO_{\PP^2}(-2)^{[3]},\OO_{S_p})\,\,\,\text{ for $2\leq i \leq 6$}.
\end{equation*}
Again using \eqref{krug2} proves the second claim.
\end{proof}

To study the locally free sheaves $\mathcal{E}_p$ on $\HP$ we note that the exact sequence
\begin{equation*}
	\begin{tikzcd}
		0 \arrow{r} & q^{*}\mathcal{V}^{\vee}  \arrow{r} & \mathcal{E} \arrow{r} & \mathcal{I}_{\mathcal{Z}}\otimes p^{*}\OO_{\PP^2}(1) \arrow{r} & 0 
	\end{tikzcd}
\end{equation*}
restricts to the fiber over $p\in \PP^2$ as
\begin{equation}\label{ep}
	\begin{tikzcd}
		0 \arrow{r} & \OO_{\PP^2}(-2)^{[3]}  \arrow{r} & \mathcal{E}_p \arrow{r} & I_{S_p} \arrow{r} & 0 
	\end{tikzcd}
\end{equation}
by using flatness of $\mathcal{I}_{\mathcal{Z}}$ over $\PP^2$ and  Lemma \ref{rext}. We can now prove: 

\begin{thm}\label{ext1}
	Let $\mathcal{E}$ be the $\HP$-family of $\mu$-stable locally free sheaves, then for any pair of closed points $p,q\in \PP^2$ with $p\neq q$ we have
	\begin{equation*}
		\ext^1(\mathcal{E}_p,\mathcal{E}_q)\geq 1.
	\end{equation*}
\end{thm}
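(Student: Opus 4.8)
The plan is to reduce everything to $\Ext$--groups among the three sheaves $\OO_{\PP^2}(-2)^{[3]}$, $I_{S_p}$ and $I_{S_q}$ on $\HP$. Applying $\Hom(-,\mathcal{E}_q)$ to the sequence \eqref{ep} for $p$, and $\Hom(\mathcal{E}_p,-)$, $\Hom(I_{S_p},-)$, $\Hom(\OO_{\PP^2}(-2)^{[3]},-)$ to the sequence \eqref{ep} for $q$, arranges the complexes $R\Hom(\mathcal{E}_p,\mathcal{E}_q)$, $R\Hom(\mathcal{E}_p,I_{S_q})$, $R\Hom(I_{S_p},\mathcal{E}_q)$, $\dots$ into a commutative array of exact triangles whose four outer corners are $R\Hom(X,Y)$ with $X\in\{\OO_{\PP^2}(-2)^{[3]},I_{S_p}\}$ and $Y\in\{\OO_{\PP^2}(-2)^{[3]},I_{S_q}\}$. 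Lemma \ref{comp} already delivers two of these corners: $R\Hom(I_{S_p},\OO_{\PP^2}(-2)^{[3]})$ is $\mathbb{C}$ placed in degree $1$, where it carries the extension class $e_p$ of \eqref{ep}, while $R\Hom(\OO_{\PP^2}(-2)^{[3]},I_{S_q})$ is concentrated in degrees $0$ and $1$ with $\ext^1\geq 1$.

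For the two remaining corners I would proceed as follows. The self-$\Ext$ complex $R\Hom(\OO_{\PP^2}(-2)^{[3]},\OO_{\PP^2}(-2)^{[3]})$ of the tautological bundle I would pin down either via a rigidity statement for tautological bundles of line bundles on $\HP$, or by the same type of computation used in section \ref{sect3}. For $R\Hom(I_{S_p},I_{S_q})$ I would plug the sequence \eqref{ide}, for $p$ and for $q$, into the computations \eqref{krug1} and \eqref{krug2}; this reduces the problem to $\Ext$--groups among $\OO_{\HP}$, $\OO_{S_p}$ and $\OO_{S_q}$, which are then controlled by \eqref{krug1}, \eqref{krug2}, by $H^{>0}(\OO_{\HP})=0$, and by the vanishing of $H^{>0}$ of the structure sheaves of the incidence loci $S_p$ (which are rational). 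The essential qualitative point here is that $\Hom(I_{S_p},I_{S_q})=0$ whenever $p\neq q$: since $S_p$ has codimension two, $\mathcal{H}om(I_{S_p},\OO_{\HP})=\OO_{\HP}$, so any nonzero homomorphism $I_{S_p}\to I_{S_q}$ would be a scalar multiple of the inclusion $I_{S_p}\hookrightarrow\OO_{\HP}$ with image inside $I_{S_q}$, which would force the scheme-theoretic inclusion $S_q\subseteq S_p$ --- absurd for $p\neq q$. (The analogous vanishing $\Hom(\OO_{S_p},\OO_{S_q})=0$ is obtained the same way, $S_p$ and $S_q$ meeting properly.)

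Once all four corners are known, the theorem follows by chasing the array of long exact sequences. The vanishing $\Hom(I_{S_p},I_{S_q})=0$ forces the relevant connecting maps out of $R\Hom(I_{S_p},-)$ to be injective in low degree, so that the nonzero group $\Ext^1(\OO_{\PP^2}(-2)^{[3]},I_{S_q})$ and the one-dimensional $\Ext^1(I_{S_p},\OO_{\PP^2}(-2)^{[3]})$ carrying $e_p$ combine to produce a nonzero class in $\Ext^1(\mathcal{E}_p,\mathcal{E}_q)$; if the auxiliary groups $\Ext^{\geq 1}(\OO_{\PP^2}(-2)^{[3]},\OO_{\PP^2}(-2)^{[3]})$ and $\Ext^{\geq 2}(I_{S_p},I_{S_q})$ vanish as expected one even gets the clean formula $\ext^1(\mathcal{E}_p,\mathcal{E}_q)=1+\ext^1(I_{S_p},I_{S_q})$, hence $\ext^1(\mathcal{E}_p,\mathcal{E}_q)\geq 1$. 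The hard part is precisely the bookkeeping of these auxiliary groups: one must ensure that the contribution of $\Ext^1(\OO_{\PP^2}(-2)^{[3]},I_{S_q})$ is not wiped out by an incoming differential (equivalently, that $\Ext^2(I_{S_p},I_{S_q})$ and the higher self-$\Ext$ of the tautological bundle do not interfere), and this is where the explicit cohomology computations of section \ref{sect3} are indispensable.
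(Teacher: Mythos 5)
Your overall architecture coincides with the paper's: restrict the defining extension \eqref{ep} for both $p$ and $q$ and reduce $\Ext^1(\mathcal{E}_p,\mathcal{E}_q)$ to the four corner complexes $R\Hom(X,Y)$ with $X\in\{\OO_{\PP^2}(-2)^{[3]},I_{S_p}\}$ and $Y\in\{\OO_{\PP^2}(-2)^{[3]},I_{S_q}\}$; two corners are indeed Lemma \ref{comp}, and the self-$\Ext$ of the tautological bundle is again \cite[Theorem 3.17.]{krug3}. The genuine gap is at the fourth corner, $R\Hom(I_{S_p},I_{S_q})$, and you have correctly flagged it as the place where the argument could die: to make the class coming from $\Ext^1(\OO_{\PP^2}(-2)^{[3]},I_{S_q})$ survive the connecting maps you need $\Ext^2(I_{S_p},I_{S_q})=0$ (not just $\Hom=0$), and none of the tools you invoke delivers this. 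Your reflexive-hull argument is fine but only gives the degree-zero vanishing; \eqref{krug1} and \eqref{krug2} say nothing about $\Ext^i(\OO_{S_p},\OO_{S_q})$; and rationality of $S_p$ only controls $\Ext^{>0}(\OO_{\HP},\OO_{S_p})=H^{>0}(\OO_{S_p})$. Worse, the higher groups you would need to vanish termwise do not: $S_p$ and $S_q$ are codimension-two loci meeting along a nonempty surface, so $\mathcal{E}xt^2(\OO_{S_p},\OO_{S_q})$ is a nonzero sheaf supported on $S_p\cap S_q$ and $\Ext^2(\OO_{S_p},\OO_{S_q})\neq 0$. The vanishing of $\Ext^{\geq 1}(I_{S_p},I_{S_q})$ is therefore an exact cancellation across the ladder of long exact sequences, not a consequence of "meeting properly", and your sketch does not perform that computation.

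The paper closes exactly this hole with one external input you did not use: by \cite[Theorem 1.2]{krug2} the Fourier--Mukai transform $\Phi_{\mathcal{I}_{\mathcal{Z}}}:\Db(\PP^2)\to\Db(\HP)$ is fully faithful, and flatness of $\mathcal{I}_{\mathcal{Z}}$ over $\PP^2$ then gives $\Ext^i(I_{S_p},I_{S_q})\cong\Ext^i(k(p),k(q))=0$ for all $i$ in one stroke. With that, the chase is short: $\Ext^i(I_{S_p},\mathcal{E}_q)\cong\Ext^i(I_{S_p},\OO_{\PP^2}(-2)^{[3]})$, so $\Ext^2(I_{S_p},\mathcal{E}_q)=0$ by Lemma \ref{comp}, and applying $\Hom(-,\mathcal{E}_q)$ to \eqref{ep} yields a surjection $\Ext^1(\mathcal{E}_p,\mathcal{E}_q)\twoheadrightarrow\Ext^1(\OO_{\PP^2}(-2)^{[3]},\mathcal{E}_q)\cong\Ext^1(\OO_{\PP^2}(-2)^{[3]},I_{S_q})$, whose target has dimension at least $1$. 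A side remark: your ``clean formula'' $\ext^1(\mathcal{E}_p,\mathcal{E}_q)=1+\ext^1(I_{S_p},I_{S_q})$ is not what the bookkeeping produces; the correct output is $\ext^1(\mathcal{E}_p,\mathcal{E}_q)=\ext^1(\OO_{\PP^2}(-2)^{[3]},I_{S_q})+\varepsilon$ with $\varepsilon\in\{0,1\}$, where $\ext^1(\OO_{\PP^2}(-2)^{[3]},I_{S_q})=\hom(\OO_{\PP^2}(-2)^{[3]},I_{S_q})+1$ need not equal $1$. The lower bound $\geq 1$ is what survives either way, but only once the Krug--Sosna input (or an equivalent hand computation of $\Ext^\bullet(I_{S_p},I_{S_q})$) is in place.
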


\begin{proof}
By \cite[Theorem 1.2]{krug2} the Fourier-Mukai transform
\begin{equation*}
\Phi_{\mathcal{I}_{\mathcal{Z}}}: \Db(\PP^2) \rightarrow \Db(\HP)
\end{equation*}
is fully faithful, that is for $p,q\in \PP^2$ with $p\neq q$  we have by flatness of $\mathcal{I}_{\mathcal{Z}}$ over $\PP^2$:
\begin{equation*}
	\Ext^i(I_{S_p},I_{S_q})\cong \Ext^i(k(p),k(q))= 0\,\,\,\text{for $0\leq i\leq 6$}.
\end{equation*}
So applying $\Hom(I_{S_q},-)$ with $q\neq p$ to \eqref{ep} gives isomorphisms 
\begin{equation*}
	\Ext^i(I_{S_q},\mathcal{E}_p)\cong \Ext^i(I_{S_q},\OO_{\PP^2}(-2)^{[3]})\,\,\,\text{for $0\leq i\leq 6$}.
\end{equation*}
If we apply $\Hom(\OO_{\PP^2}(-2)^{[3]},-)$ and use \cite[Theorem 3.17.]{krug3} again to see
\begin{equation*}
	\ext^i(\OO_{\PP^2}(-2)^{[3]},\OO_{\PP^2}(-2)^{[3]})=\begin{cases}
		1 & i=0 \\ 0 & i\geq 1
	\end{cases}
\end{equation*}
we get an exact sequence
\begin{equation*}
	\begin{tikzcd}
		0 \arrow{r} & \mathbb{C} \arrow{r} & \Hom(\OO_{\PP^2}(-2)^{[3]},\mathcal{E}_p) 
		\arrow{r} & \Hom(\OO_{\PP^2}(-2)^{[3]},I_{S_p})\arrow{r} & 0 
	\end{tikzcd}
\end{equation*}
and isomorphisms
\begin{equation*}
	\Ext^i(\OO_{\PP^2}(-2)^{[3]},\mathcal{E}_p)\cong 	\Ext^i(\OO_{\PP^2}(-2)^{[3]},I_{S_p})\,\,\,\text{for $1\leq i \leq 6$}.
\end{equation*}

Finally applying $\Hom(-,\mathcal{E}_q)$ with $q\neq p$ to \eqref{ep} we get the following relevant part of the induced long exact sequence:
\begin{equation*}
	\begin{tikzcd}
	\arrow{r} & \Ext^1(\mathcal{E}_p,\mathcal{E}_q) \arrow{r} & \Ext^1(\OO_{\PP^2}(-2)^{[3]},\mathcal{E}_q) 
		\arrow{r} & \Ext^2(I_{S_p},\mathcal{E}_q)\arrow{r} & {} 
	\end{tikzcd}
\end{equation*}
With the previous results this sequence gets:
\begin{equation*}
	\begin{tikzcd}
		 \arrow{r} &\Ext^1(\mathcal{E}_p,\mathcal{E}_q) \arrow{r} & \Ext^1(\OO_{\PP^2}(-2)^{[3]},I_{S_q}) \arrow{r} & \Ext^2(I_{S_p},\OO_{\PP^2}(-2)^{[3]}) \arrow{r} & {} 
	\end{tikzcd}
\end{equation*}
Using Lemma \ref{comp} we have $\Ext^2(I_{S_p},\OO_{\PP^2}(-2)^{[3]})=0$ and thus
\begin{equation*}
	\ext^1(\mathcal{E}_p,\mathcal{E}_q)\geq \ext^1(\OO_{\PP^2}(-2)^{[3]},I_{S_q})\geq 1.\qedhere
\end{equation*}
\end{proof}

\section{Non-full faithfulness of the universal family}\label{sect4}
We want to study the full faithfulness of the Fourier-Mukai transform 
\begin{equation*}
	\Phi_{\mathcal{U}}: \Db(\PP^2) \rightarrow \Db(\mathcal{M}_{\PP^2}(4,1,3))
\end{equation*}
induced by the universal family $\mathcal{U}$ of the moduli space $\mathcal{M}_{\PP^2}(4,1,3)$.

We will use the following corollary of the Bondal-Orlov criterion for full faithfulness:
\begin{lem}\cite[Corollary 7.5]{huy2}\label{orlo}
	Let $X$ and $Y$ be two smooth projective varieties and $\mathcal{P}$ a coherent sheaf on $X\times Y$, flat over $X$. Then the Fourier-Mukai transform 
	\begin{equation*}
		\Phi_{\mathcal{P}}: \Db(X) \rightarrow \Db(Y)
	\end{equation*}
is fully faithful if and only if the following two conditions are satisfied
\begin{enumerate}[i)]
	\item For any closed point $x\in X$ one has $\Ext^i(\mathcal{P}_x,\mathcal{P}_x)=\begin{cases}
		\mathbb{C} & i=0 \\ 0 & i>\dim(X)
	\end{cases}$
	\item For any pair of closed points $x,y\in X$ with $x\neq y$ one has $\Ext^i(\mathcal{P}_x,\mathcal{P}_y)=0$ for all i.
\end{enumerate}
\end{lem}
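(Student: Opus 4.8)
The plan is to derive this criterion from the Bondal--Orlov criterion for full faithfulness in terms of the spanning class of skyscraper sheaves, which is the theorem in \cite{huy2} of which the present statement is the cited corollary; I take that criterion as given. In the form I will use it, since the skyscrapers $k(x)$ for closed $x\in X$ form a spanning class of $\Db(X)$ and $\Phi_{\mathcal{P}}$ admits both adjoints, the functor $\Phi_{\mathcal{P}}$ is fully faithful if and only if for every pair of closed points $x,y\in X$ one has
\begin{equation*}
\Hom_{\Db(Y)}(\Phi_{\mathcal{P}}(k(x)),\Phi_{\mathcal{P}}(k(y))[i])=\begin{cases}\mathbb{C} & x=y,\ i=0\\ 0 & x\neq y,\ \text{or}\ i<0,\ \text{or}\ i>\dim(X).\end{cases}
\end{equation*}
Thus the whole task reduces to rewriting these morphism spaces in terms of the fiber sheaves $\mathcal{P}_x$ and then checking that the resulting conditions are exactly i) and ii).

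The key step, and the one place where the flatness hypothesis is genuinely used, is the identification $\Phi_{\mathcal{P}}(k(x))\cong \mathcal{P}_x$ as an object of $\Db(Y)$ concentrated in degree zero. Writing $p\colon X\times Y\to X$ and $q\colon X\times Y\to Y$ for the projections, one has $\Phi_{\mathcal{P}}(k(x))=Rq_{*}(Lp^{*}k(x)\otimes^{L}\mathcal{P})$. Since $p$ is flat, $Lp^{*}k(x)=p^{*}k(x)=\OO_{\{x\}\times Y}$, the structure sheaf of the fiber. The flatness of $\mathcal{P}$ over $X$ ensures that the derived restriction $\OO_{\{x\}\times Y}\otimes^{L}\mathcal{P}$ carries no higher $\mathcal{T}or$ terms and so agrees with the ordinary restriction $\mathcal{P}_x$, a sheaf supported on $\{x\}\times Y$. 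Finally $q$ restricts to an isomorphism $\{x\}\times Y\xrightarrow{\sim}Y$, so $Rq_{*}$ introduces no higher direct images and we obtain $\Phi_{\mathcal{P}}(k(x))\cong \mathcal{P}_x$. I expect this identification to be the main point to get right: without flatness, $\Phi_{\mathcal{P}}(k(x))$ would be a genuine complex and the criterion would not reduce to a statement about $\Ext$-groups of honest sheaves.

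With this in hand the morphism spaces become ordinary $\Ext$-groups,
\begin{equation*}
\Hom_{\Db(Y)}(\Phi_{\mathcal{P}}(k(x)),\Phi_{\mathcal{P}}(k(y))[i])\cong \Ext^{i}(\mathcal{P}_x,\mathcal{P}_y),
\end{equation*}
and it remains only to match the conditions. Because $\mathcal{P}_x$ and $\mathcal{P}_y$ are sheaves, $\Ext^{i}(\mathcal{P}_x,\mathcal{P}_y)=0$ holds automatically for $i<0$, so the Bondal--Orlov requirements in negative degrees are vacuous. For $x=y$ the surviving requirements are $\Ext^{0}(\mathcal{P}_x,\mathcal{P}_x)=\mathbb{C}$ together with $\Ext^{i}(\mathcal{P}_x,\mathcal{P}_x)=0$ for $i>\dim(X)$, which is precisely condition i); for $x\neq y$ the requirement $\Ext^{i}(\mathcal{P}_x,\mathcal{P}_y)=0$ for all $i$ (only the range $i\geq 0$ being nontrivial) is precisely condition ii). Since the two families of conditions coincide and Bondal--Orlov is an equivalence, the stated criterion follows in both directions at once.
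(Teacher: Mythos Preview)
The paper does not supply its own proof of this lemma: it is quoted verbatim as \cite[Corollary 7.5]{huy2} and used as a black box. Your argument is correct and is precisely the standard derivation of that corollary from the Bondal--Orlov criterion, so there is nothing to compare against and nothing to fix.
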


To apply this lemma to  $\mathcal{P}=\mathcal{U}$, the universal family of $\mathcal{M}_{\PP^2}(4,1,3)$, we need to be able to compute $\Ext^i(\mathcal{U}_p,\mathcal{U}_q)$. The following lemma reduces this problem to computing $\Ext^i(\mathcal{E}_p,\mathcal{E}_q)$:
\begin{lem}\label{exts}
	Let $\mathcal{U}$ be the universal family of $\mathcal{M}_{\PP^2}(4,1,3)$ and $\mathcal{E}$ be the $\HP$-family, then for any two points $p,q\in \PP^2$ there are the following isomorphisms for all $i$:
	\begin{equation*}
		\Ext^i(\mathcal{U}_p,\mathcal{U}_q)\cong \Ext^i(\mathcal{E}_p,\mathcal{E}_q).
	\end{equation*}
\end{lem}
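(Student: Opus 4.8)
The plan is to exploit the relation \eqref{univ}, namely $(\id_{\PP^2}\times f_{\mathcal{E}})^{*}\mathcal{U}\otimes q^{*}L\cong \mathcal{E}$ for some $L\in\Pic(\HP)$, and to reduce the computation of $\Ext$-groups of the fibers $\mathcal{U}_p$ on $\mathcal{M}_{\PP^2}(4,1,3)$ to the corresponding $\Ext$-groups of the fibers $\mathcal{E}_p$ on $\HP$. First I would pull back along $f_{\mathcal{E}}$: restricting \eqref{univ} to the fiber over a point $p\in\PP^2$ gives an isomorphism $f_{\mathcal{E}}^{*}(\mathcal{U}_p)\otimes L\cong \mathcal{E}_p$ on $\HP$ (here $\mathcal{U}_p$ is the restriction of $\mathcal{U}$ to $\{p\}\times\mathcal{M}_{\PP^2}(4,1,3)$, which is locally free on the moduli space by Remark \ref{locf}, and $L$ is the line bundle restricted to the fiber, which is again $L$ since the second projection identifies this fiber with $\HP$). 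Since tensoring with the line bundle $L$ is an exact autoequivalence of $\Db(\HP)$, it induces isomorphisms
\begin{equation*}
	\Ext^i_{\HP}(f_{\mathcal{E}}^{*}\mathcal{U}_p, f_{\mathcal{E}}^{*}\mathcal{U}_q)\cong \Ext^i_{\HP}(f_{\mathcal{E}}^{*}\mathcal{U}_p\otimes L, f_{\mathcal{E}}^{*}\mathcal{U}_q\otimes L)\cong \Ext^i_{\HP}(\mathcal{E}_p,\mathcal{E}_q)
\end{equation*}
for all $i$ and all $p,q\in\PP^2$.

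Next I would compare $\Ext$-groups on $\mathcal{M}_{\PP^2}(4,1,3)$ with their pullbacks to $\HP$. Since $\mathcal{U}_p$ and $\mathcal{U}_q$ are locally free on $\mathcal{M}_{\PP^2}(4,1,3)$, we have $\Ext^i_{\mathcal{M}}(\mathcal{U}_p,\mathcal{U}_q)\cong H^i(\mathcal{M}_{\PP^2}(4,1,3),\mathcal{U}_p^{\vee}\otimes\mathcal{U}_q)$, and the sheaf $\mathcal{U}_p^{\vee}\otimes\mathcal{U}_q$ is again locally free. Corollary \ref{coh} then gives
\begin{equation*}
	H^i(\mathcal{M}_{\PP^2}(4,1,3),\mathcal{U}_p^{\vee}\otimes\mathcal{U}_q)\cong H^i(\HP, f_{\mathcal{E}}^{*}(\mathcal{U}_p^{\vee}\otimes\mathcal{U}_q)).
\end{equation*}
Since $f_{\mathcal{E}}^{*}$ commutes with duals and tensor products of locally free sheaves, $f_{\mathcal{E}}^{*}(\mathcal{U}_p^{\vee}\otimes\mathcal{U}_q)\cong (f_{\mathcal{E}}^{*}\mathcal{U}_p)^{\vee}\otimes f_{\mathcal{E}}^{*}\mathcal{U}_q$, and the right-hand cohomology group is $\Ext^i_{\HP}(f_{\mathcal{E}}^{*}\mathcal{U}_p, f_{\mathcal{E}}^{*}\mathcal{U}_q)$. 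Combining the two chains of isomorphisms yields $\Ext^i_{\mathcal{M}}(\mathcal{U}_p,\mathcal{U}_q)\cong\Ext^i_{\HP}(\mathcal{E}_p,\mathcal{E}_q)$ for all $i$, as claimed.

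The main point to be careful about is the bookkeeping in the first step: one must check that restricting the isomorphism \eqref{univ} to the fiber $\{p\}\times\mathcal{M}_{\PP^2}(4,1,3)$ really produces $f_{\mathcal{E}}^{*}\mathcal{U}_p\otimes L$ with the \emph{same} $L$, i.e.\ that the twist $q^{*}L$ restricts to $L$ under the identification of $\{p\}\times\HP$ with $\HP$ via $q$ — this is immediate from the definition of $q$ as the second projection, but it is the step where the flatness/local-freeness hypotheses (Remark \ref{locf}, Theorem \ref{fam1}) are genuinely used so that all pullbacks and restrictions are underived and the $\Ext$-groups are honestly sheaf cohomology of locally free sheaves. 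Everything else is formal manipulation with exact functors; no spectral sequence or vanishing beyond Corollary \ref{coh} is needed.
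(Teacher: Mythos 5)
Your proposal is correct and follows essentially the same route as the paper: identify $\Ext^i(\mathcal{U}_p,\mathcal{U}_q)$ with cohomology of the locally free sheaf $\mathcal{H}om(\mathcal{U}_p,\mathcal{U}_q)=\mathcal{U}_p^{\vee}\otimes\mathcal{U}_q$, transport it to $\HP$ via Corollary \ref{coh} and compatibility of pullback with duals and tensor products, and then absorb the line bundle twist $L$ coming from the restriction of \eqref{univ}. The only cosmetic difference is that you write $\mathcal{U}_p^{\vee}\otimes\mathcal{U}_q$ where the paper writes the sheaf $\mathcal{H}om$; for locally free sheaves these agree, so nothing changes.
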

\begin{proof}
We have the following chain of isomorphisms:
\begin{align*}
	\Ext^i(\mathcal{U}_p,\mathcal{U}_q) &\cong H^i(M,\mathcal{H}om(\mathcal{U}_p,\mathcal{U}_q))\\ 
	&\cong H^i(\HP,f_{\mathcal{E}}^{*}\mathcal{H}om(\mathcal{U}_p,\mathcal{U}_q))\\ 
	&\cong H^i(\HP,\mathcal{H}om(f_{\mathcal{E}}^{*}\mathcal{U}_p,f_{\mathcal{E}}^{*}\mathcal{U}_q))\\ 
	&\cong \Ext^i(f_{\mathcal{E}}^{*}\mathcal{U}_p,f_{\mathcal{E}}^{*}\mathcal{U}_q)\\ 
	&\cong \Ext^i(f_{\mathcal{E}}^{*}\mathcal{U}_p\otimes L,f_{\mathcal{E}}^{*}\mathcal{U}_q\otimes L)\\
	&\cong \Ext^i(\mathcal{E}_p,\mathcal{E}_q) 
\end{align*}	
Here the first and third isomorphism use the locally freeness of $\mathcal{U}_p$, see Remark \ref{locf}. The second isomorphism is Corollary \ref{coh}. The fourth isomorphism uses locally freeness of $f_{\mathcal{E}}^{*}\mathcal{U}_p$, while the sixth isomorphism follows from restricting \eqref{univ} to the fiber over $p\in \PP^2$.
\end{proof}
We can now prove the main theorem of this note:
\begin{thm}
	The Fourier-Mukai transform 
	\begin{equation*}
	\Phi_{\mathcal{U}}: \Db(\PP^2) \rightarrow \Db(\mathcal{M}_{\PP^2}(4,1,3))
	\end{equation*}
 induced by the universal family $\mathcal{U}$ of the moduli space $\mathcal{M}_{\PP^2}(4,1,3)$ is not fully faithful.
\end{thm}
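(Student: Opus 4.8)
The plan is to apply the Bondal--Orlov type criterion recorded in Lemma \ref{orlo} to the sheaf $\mathcal{P}=\mathcal{U}$ on $\PP^2\times\mathcal{M}_{\PP^2}(4,1,3)$, which is coherent and flat over $\PP^2$ by Remark \ref{locf}. Since the universal family is a genuine sheaf (not a complex), full faithfulness of $\Phi_{\mathcal{U}}$ is equivalent to the two numerical conditions i) and ii) of that lemma. I will show condition ii) fails. By Lemma \ref{exts} there is, for any two points $p,q\in\PP^2$ and all $i$, an isomorphism $\Ext^i(\mathcal{U}_p,\mathcal{U}_q)\cong\Ext^i(\mathcal{E}_p,\mathcal{E}_q)$, so it suffices to exhibit a pair of distinct points $p\neq q$ in $\PP^2$ and some index $i$ with $\Ext^i(\mathcal{E}_p,\mathcal{E}_q)\neq 0$.

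The key input is Theorem \ref{ext1}, which asserts precisely that for any pair of distinct closed points $p,q\in\PP^2$ one has $\ext^1(\mathcal{E}_p,\mathcal{E}_q)\geq 1$. Combining this with Lemma \ref{exts} in the case $i=1$ gives
\begin{equation*}
\ext^1(\mathcal{U}_p,\mathcal{U}_q)=\ext^1(\mathcal{E}_p,\mathcal{E}_q)\geq 1>0
\end{equation*}
for every pair of distinct points $p,q\in\PP^2$. In particular condition ii) of Lemma \ref{orlo} is violated (already the vanishing of $\Ext^1$ fails, let alone the vanishing for all $i$). Therefore $\Phi_{\mathcal{U}}$ cannot be fully faithful, which is the assertion of the theorem.

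There is essentially no obstacle left at this stage: all the substantive work has been front-loaded into Theorem \ref{ext1} (the nonvanishing of a relative $\Ext^1$ between fibers of the explicit family $\mathcal{E}$, obtained by chasing the defining sequence \eqref{ep} against the tautological sheaf $\OO_{\PP^2}(-2)^{[3]}$ and using the full faithfulness of $\Phi_{\mathcal{I}_{\mathcal{Z}}}$ together with the $\Ext$-computations of Lemma \ref{comp}) and into Lemma \ref{exts} (which transports this nonvanishing from $\mathcal{E}$ to $\mathcal{U}$ via the birational classifying morphism $f_{\mathcal{E}}$ of Lemma \ref{bir}, using Corollary \ref{coh} and the twist \eqref{univ} by a line bundle pulled back from $\HP$). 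The only point requiring a moment's care is to note that Lemma \ref{orlo} really does apply to $\mathcal{U}$: it is coherent and flat over $\PP^2$ because $\mathcal{U}$ is locally free on the product (Remark \ref{locf}), so the hypotheses are met and the failure of condition ii) is genuinely an obstruction to full faithfulness rather than an artifact of the wrong formulation.
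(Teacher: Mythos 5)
Your proposal is correct and follows exactly the paper's own argument: apply the Bondal--Orlov criterion of Lemma \ref{orlo}, transport the computation from $\mathcal{U}$ to $\mathcal{E}$ via Lemma \ref{exts}, and invoke the nonvanishing $\ext^1(\mathcal{E}_p,\mathcal{E}_q)\geq 1$ of Theorem \ref{ext1} to violate condition ii). The only (welcome) addition is your explicit remark that $\mathcal{U}$ is flat over $\PP^2$ by Remark \ref{locf}, so the hypotheses of Lemma \ref{orlo} are genuinely satisfied.
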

\begin{proof}
For the Fourier-Mukai transform $\Phi_{\mathcal{U}}$ to be fully faithful one needs
\begin{equation}\label{van}
	\Ext^i(\mathcal{U}_p,\mathcal{U}_q)=0
\end{equation}
for any pair of points $p,q\in \PP^2$ with $p\neq q$ and any $i$ according to Lemma \ref{orlo}.

Lemma \ref{exts} shows that \eqref{van} is equivalent to
\begin{equation*}
	\Ext^i(\mathcal{E}_p,\mathcal{E}_q)=0.
\end{equation*}
But we have $\Ext^1(\mathcal{E}_p,\mathcal{E}_q)\neq 0$ by Lemma \ref{ext1}, so $\Phi_{\mathcal{U}}$ cannot be fully faithful.
\end{proof}

\end{document}